\newcommand{\geqs}{\geqslant}
\newcommand{\leqs}{\leqslant}
\newcommand{\Nb}{{\mathbb N}}
\newcommand{\ve}{\varepsilon}
\newcommand{\sgn}{\operatorname{sgn}} 
\newtheorem{theorem}{Theorem}[section]
\newtheorem{lemma}[theorem]{Lemma}
\newtheorem{prop}[theorem]{Proposition}
\newtheorem{definition}[theorem]{Definition}
\newtheorem{coro}[theorem]{Corollary}
\newtheorem{remark}[theorem]{Remark}
\theoremstyle{definition}
\theoremstyle{remark}
\begin{document}
	
\title[Existence, uniqueness, and basic properties]{modelling silicosis: existence, uniqueness and basic properties of solutions}

\author{Fernando P. da Costa}

\address[F.P. da Costa]{Univ. Aberta, Dep. of Sciences and Technology,
  Rua da Escola Polit\'ecnica 141-7, P-1269-001 Lisboa, Portugal, and
  Univ. Lisboa, Instituto Superior T\'ecnico, Centre for Mathematical
  Analysis, Geometry and Dynamical Systems, Av. Rovisco Pais,
  P-1049-001 Lisboa, Portugal.}  \email{fcosta@uab.pt}

\author{Jo\~ao T. Pinto} 

\address[J.T. Pinto]{Univ. Lisboa, Instituto Superior T\'ecnico, Dep. of Mathematics
and Centre for Mathematical
  Analysis, Geometry and Dynamical Systems, Av. Rovisco Pais,
  P-1049-001 Lisboa, Portugal.}  \email{joao.teixeira.pinto@tecnico.ulisboa.pt}

\author{Rafael Sasportes} \address[R. Sasportes]{Univ. Aberta, Dep. of Sciences and Technology,
  Rua da Escola Polit\'ecnica 141-7, P-1269-001 Lisboa, Portugal, and
  Univ. Lisboa, Instituto Superior T\'ecnico, Centre for Mathematical
  Analysis, Geometry and Dynamical Systems, Av. Rovisco Pais,
  P-1049-001 Lisboa, Portugal.}  \email{rafael.sasportes@uab.pt}

\thanks{Research partially supported by Funda\c{c}\~ao para a Ci\^encia e a Tecnologia (Portugal) 
through project CAMGSD UIDB/MAT/04459/2020.}
\thanks{\emph{Corresponding author:} F.P. da Costa}

\date{\today}

\subjclass{Primary 34A12, 34A34; Secondary 92C50}

\keywords{Coagulation--fragmentation--death equations, model of silicosis}

\begin{abstract}
We present a model for the silicosis disease mechanism following the original proposal by Tran, Jones, and Donaldson (1995) \cite{tran}, as
modified recently by da Costa, Drmota, and Grinfeld (2020) \cite{cdg}. The model
consists in an infinite ordinary differential equation system of coagulation-fragmentation-death type.
Results of existence, uniqueness, continuous dependence on the initial data and differentiability of solutions are proved for 
the initial value problem.
\end{abstract}

\maketitle
\section{Introduction}	
Silicosis is an occupational lung disease characterized by inflammation and fibrosis of the
lungs caused by the inhalation of crystalline silica dust (also known as quartz or silicon dioxide dust,
with chemical composition {\sf SiO}$_2$) 
and can progress to respiratory failure and death \cite{lancet}. Although it is a preventable disease, it is presently incurable and affects 
hundreds of thousands of persons, killing an estimated eleven thousand annually (see figures for 2017 in \cite{GBD2017}).

\medskip

In this paper we consider a mathematical model for the process used by the immune system to clear inhalated silica
particles from the lungs' alveoli. The model is a slight modification proposed in \cite{cdg} of one introduced in \cite{tran} 
that, to the best of our knowledge, has not 
received the attention we think it deserves.

\medskip

We start by giving a simplified description of the physiological processes, essentially using \cite{tran}, followed by the 
presentation of the mathematical model to be studied.

\medskip

When a silica particle enters the respiratory system down to the pulmonary alveoli (the innermost part of the lungs) its 
presence can be detected by cells of the immune system called alveoli macrophages, either by random encounters or by chemotaxis
generated by the silica particles. Macrophages can then engulf the silica particle in a process known as phagocytosis. 
Typically, the phagocytosis of organic materials (pathogens, dying or dead cells and cellular debris) result in its 
destruction by the macrophage with the production of basic molecular fragments followed by their expulsion or assimilation  \cite{abbas}. 
In the case of silica particles, however, there is essentially nothing to be destroyed from a molecular point of view and the
role of the macrophages is just to expel their load of silica by travelling up the mucociliary escalator to get
eliminated out of the respiratory system \cite{xu}. However, due to a mechanism not yet completely understood,
silica particles are toxic to the macrophages \cite{gilberti} and lead to their 
death, which can occur while they are still in the lungs, thus liberating the silica particles back into the respiratory system.

\medskip

A macrophage can ingest more than one silica particle. The hability of a given macrophage already containing $i$ particles
to ingest an additional particle typically decreases with $i$ and in the model in \cite{tran} a maximum load capacity of 
$n_\text{max}<\infty$
is assumed \textit{a priori.} Given the toxicity of the silica particles, macrophages with a higher load of silica particles die 
at an higher rate. Additionally, the capacity of the macrophages to migrate up the mucocilliary escalator is also impaired by
an increase in their load of silica particles. It is the balance of these three types of processes that leads to the
mathematical model in \cite{tran} and that we also consider here with the modifications introduced in \cite{cdg}.

\medskip

Let us denote by $M_i = M_i(t)$ the concentration of macrophages containing $i$
silica particles (which we will call the $i$-th cohort) at time $t$, by $x=x(t)$ the
concentration of silica particles, and by $r$ (which, in principle, can be a function of $x$) the
rate of supply of new (empty of silica particles) macrophages. Following \cite{tran}, we obtain the
following equations for the mechanism described above:
\begin{eqnarray*}
    \frac{dM_0}{dt} & ~=~ & r - k_0 x M_0 - (p_0+q_0) M_0,\\
    \frac{dM_i}{dt} & ~=~ &k_{i-1} x M_{i-1} - k_i x M_i -(p_i+q_i)
    M_i, \;\; i \geqslant 1,
\end{eqnarray*}
where  $k_i$ is the rate of phagocytosis of a silica particle by a macrophage already containing
$i$ particles, $p_i$, is the transfer rate of macrophages in
the $i$-th cohort to the mucocilliary escalator, i.e. the rate of
their removal from the pulmonary alveoli together with their quartz load, and $q_i$ is the
rate of death of the macrophages in the $i$-th cohort which results in the release of the
quartz burden back into the lungs.   Unlike in \cite{tran} we do not impose an upper limit on  the
number of particles $n_\text{max}<\infty$ a macrophage can contain, the existence of such a
load capacity will be a consequence of the assumptions on the rate coefficients $k_i$ and $q_i$. 
In \cite{cdg} the following  rate equation for the evolution of the concentration of
silica particles in the system under the assumption of an inhalation rate $\alpha$ was
postulated, valid under the same assumption about the validity of the
mass action law used to obtain the equations for the $M_i$:
\begin{eqnarray*}
  \frac{dx}{dt} = \alpha - x \sum_{i=0}^\infty  k_i M_i + \sum_{i=0}^\infty q_i i M_i.
\end{eqnarray*}

A scheme of the processes modelled by the above rate equations is  presented in Figure~\ref{fig1}.

%
%
%
\begin{figure}[h]
	\includegraphics[scale=1.0]{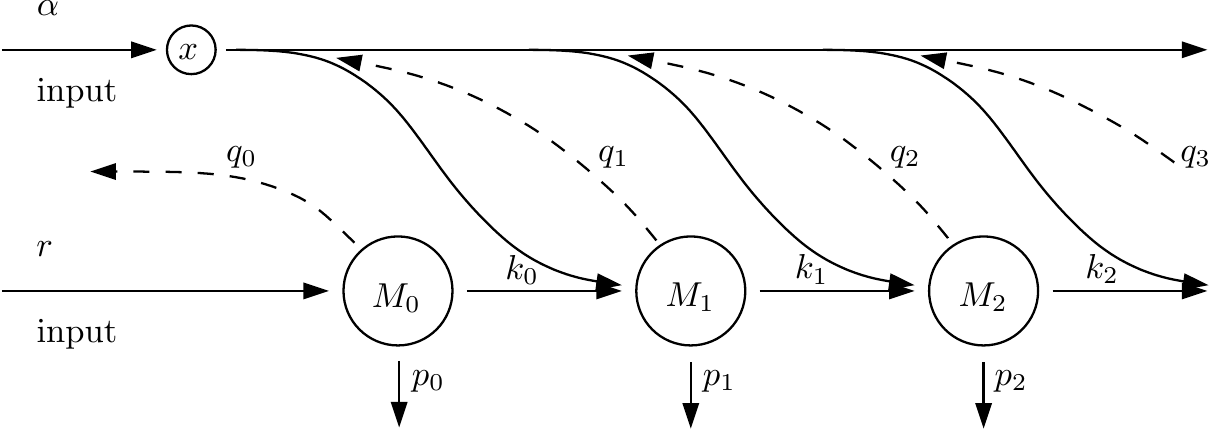}
	\caption{Reaction scheme of the mechanism studied in this paper. The input
		rates of quartz and of macrophages with no quartz
		particles are $\alpha$ and $r$, respectively. The concentration of free
		quartz particles and of macrophages containing $j$ quartz
		particles are represented by $x$ and $M_j$, respectively. 
		Macrophages $M_j$ can be destroyed, releasing $j$ quartz particles (dashed lines),
		or they can be removed by rising in the mucocilliary escalator (vertical downward
		lines), or they can ingest an additional quartz particle
		becoming an $M_{j+1}$ macrophage (horizontal rightward arrows).}\label{fig1}
\end{figure}
%
%
%

\medskip

In \cite{cdg} the equilibria of this system (with a countable infinite number) of equations was studied. An important 
modelling goal is, of course, the study of the dynamical stability properties of those equilibria and for this we need to
consider solutions which are not equilibria and to study their behaviour as $t\to +\infty$. Thus, the existence of 
solutions of the initial value problem for this system with a countable infinite number of ordinary differential equations is a problem that has to
be addressed first. This is the goal of the present paper.

\medskip

In section 2 we state, in precise mathematical terms, the problem to be studied and the basic approach we shall use, and
relate this system to others studied in the literature of cluster growth modelling. In section 3 we consider a finite dimension
truncated system that shall be used, in section 4,  to prove the existence of solutions to the Cauchy problem, which can be
considered the main result of the paper. To prove differentiability of the solutions, in section 6, as well as uniqueness, in section 7, and
the semigroup property, in section 8, a number of  \emph{a priori} estimates and evolution equations for the solution moments are needed
and will be stated and proved in section 5.

\section{Preliminaries}

The Cauchy problem we will consider in this paper is the following system of a countable number of
ordinary differential equations with nonnegative initial conditions
\begin{equation}\label{silsys}
\left\{
\begin{aligned}
&\dot{M}_0=r-k_0xM_0-(p_0+q_0)M_0\;,\\
&\dot{M}_i=k_{i-1}xM_{i-1}-k_ixM_i-(p_i+q_i)M_i\;, \quad i\geqslant 1\,,\\
&\dot{x}=\alpha -x\sum_{i=0}^\infty k_iM_i+\sum_{i=0}^\infty iq_iM_i\,,\\
&x(0)=x_0,\quad M_i(0)=M_{0i},\quad i=0,1,2,\dots\,.\rule{0mm}{6mm}
\end{aligned}
\right.
\end{equation}

This is an infinite dimensional system of ordinary differential equations of the type
coagulation-fragmentation-death type and its analysis requires the definition of an  Banach space to work in, as well as an 
adequate definition of solution. From the biological interpretation of the phase variables $x$ and $M_i$ ($i=0, 1, \ldots$)
given in the Introduction we can define the following quantities:
\begin{description}
\item[\rm Amount of quartz particles] $\displaystyle{\mathcal{X}:=x+\sum_{i=0}^\infty iM_i}$,
\item[\rm Amount of macrophages] $\displaystyle{\mathcal{M}:=\sum_{i=0}^\infty M_i}$,
\item[\rm Total amount of matter] $\displaystyle{\mathcal{U}:=\mathcal{X}+\mathcal{M}=x+\sum_{i=0}^\infty (i+1)M_i}$\,.
\end{description}
It is easy to conclude from \eqref{silsys} that, at a formal level,
\[
\dot{\mathcal{M}}= \sum_{i=0}^\infty \dot{M}_i=\dot{M}_0+\sum_{i=1}^\infty \dot{M}_i =  r-\sum_{i=0}^\infty (p_i+q_i)M_i\,,
\]
\[
\dot{\mathcal{X}}=\dot{x}+ \sum_{i=1}^\infty i\dot{M}_i = \alpha -\sum_{i=1}^\infty ip_iM_i\,,
\]
and 
\[
\dot{\mathcal{U}} = \dot{\mathcal{X}} + \dot{\mathcal{M}} = (r+\alpha) - \sum_{i=1}^\infty \bigl((i+1)p_i+q_i\bigr)M_i\,.
\]
\begin{remark}
These formal computations are justified if, for all $i=0,1,2,\dots$, and all $t\geqslant 0$, we have, 
 $M_i\geqslant 0$,
$\sum_{i=0}^\infty M_i<\infty$, 
and $\sum_{i=0}^\infty (k_i+p_i+q_i)iM_i< \infty,$ 
being these convergences uniform on each interval $[0,T]$.
\end{remark}

\noindent
Therefore, at least at a formal level, we have the following \emph{a priori} bounds: 
$\dot{\mathcal{M}}\leqslant r$, $\dot{\mathcal{X}}\leqslant\alpha$, and $\dot{\mathcal{U}}\leqslant r+\alpha$\,. 

\medskip

These formal computations and a comparison with the case of Becker-D\"oring equations \cite{BCP} and with other more general
coagulation-fragmentation systems \cite{BLL} suggest that considering a space of sequences $(x, M_0, M_1, \ldots)$ where, for nonnegative 
sequences, the quantity $\mathcal{U}$ is finite can be expected to be a mathematically reasonable space to work in and, furthermore, will 
also have biological meaning.

\medskip

Thus, let us denote an element of $\mathbb{R}^\mathbb{N}$ by $y=(y_n)=(x,M_0,M_1,\dots)$. Define
$$
\|y\|=|x|+\sum_{i=0}^\infty (i+1)|M_i|=|x|+|((i+1)M_i)_{i=0,1,\dots}|_{\ell^1}\,,
$$
and
$
X=\{y=(y_n): \|y\|<\infty\}\,.
$
We say that $y\geqslant 0$ if and only if $y\in(\mathbb{R}_0^+)^\mathbb{N}$ and will denote the nonnegative cone of $X$ by $X_+:=\{y\in X :  y\geqslant 0\}$.

\medskip

Similarly, comparison with other types of coagulation-fragmentation equations suggests the adoption of a definition of solution that is very close 
to that used in \cite{BCP} for the Becker-D\"oring system. This is not surprising, since an inspection of the model to be considered shows that, as in the 
Becker-D\"oring system, there is a phase variable that has a crucially distinct role in the dynamics: in the Becker-D\"oring system was the concentration
of monomers \cite{BCP}, in system \eqref{silsys} it is the concentration of crystaline silica, $x$. This does not mean that \eqref{silsys} can be 
automatically transformed into the Becker-D\"oring system, but it suggests analogies in the definition of solution and in the
general approch in the proofs we shall present.

\begin{definition}\label{defsol}
	Let $0<T\leqslant +\infty$. A solution of \eqref{silsys} is a function $y:[0,T)\to X$ such that:
	\begin{enumerate}
		\item[\text{(i)}] $\forall t\in[0,T)\quad y(t)\geqslant 0$;
		\item[\text{(ii)}] each $y_n:[0,T)\to \mathbb{R}$ is continuous and $\displaystyle{\sup_{t\in[0,T']}\|y(t)\|<\infty}$, for each $T'\in (0,T)$;
		\item[\text{(iii)}] $\displaystyle{\int_0^t x(s)\,ds<\infty}$, $\displaystyle{\int_0^t\sum_{i=0}^\infty (ip_i+iq_i+k_i)M_i(s)\,ds<\infty}$, for all $t\in[0,T)$;
		\item[\text{(iv)}] for all $t\in[0,T)$,
	\begin{align}
	M_0(t)&=M_{00}+rt-\int_0^t\left[ k_0x(s)M_0(s)-(p_0+q_0)M_0(s)\right]ds\,,\label{M0t}\\
	M_i(t)&=M_{0i}\nonumber\\&+\int_0^t\left[x(s)k_{i-1}M_{i-1}(s)-x(s)k_iM_i(s)-(p_i+q_i)M_i(s)\right]ds\,,\quad i\geqslant 1\,,\label{Mit}\\
	x(t)&=x_0+\alpha t+\int_0^t\Bigl[-x(s)\sum_{i=0}^\infty k_iM_i(s)+\sum_{i=0}^\infty iq_iM_i(s)\Bigr]ds\,.\label{xt}
	\end{align}
	\end{enumerate}
\end{definition}
\medskip

A standard approach used in many studies of coagulation-fragmentation type equations is to consider a
finite $n$-dimensional truncation of the infinite dimensional system and prove \emph{a priori} bounds for the evolution
of quantities associated with the truncated system that are independent of the truncation dimension $n$. This allows us to
take limits as $n\to\infty$ in the truncated system and, in this way, to prove results for the infinite system. In order to 
apply the same strategy here, we will present, in the next section, a truncated system and a useful auxiliary time evolution result.

\section{The truncated silicosis system}

In \eqref{silsys} consider rate coefficients satisfying  $k_i=0,$ for $i\geqslant n,$ and $p_i=q_i=0,$ for $i\geqslant n+1$\,.
It is clear from the biological meaning of these constants (and also from the kinetic scheme in Figure~\ref{fig1}) that,
with these conditions, if we take initial data satisfying $M_i(0)=0$ if $i\geqslant n +1$ then, for these $i$, $M_i(t)$ will remain zero for all $t>0.$
It is due to this invariance that the above conditions give rise to a finite $(n+2)$-dimensional approximation of the infinite dimensional system
\eqref{silsys}. The corresponding Cauchy problem, which we shall use extensively in the next section, can be explicitly written as follows:

\begin{equation}\label{siltrunc}
\left\{
\begin{aligned}
&\dot{M}^n_0=r-k_0x^nM^n_0-(p_0+q_0)M^n_0\;,\\
&\dot{M}^n_i=k_{i-1}x^nM^n_{i-1}-k_ix^nM^n_i-(p_i+q_i)M^n_i\;, \quad i=1,\dots,n-1\;,\\
&\dot{M}^n_n=k_{n-1}x^nM^n_{n-1}-(p_n+q_n)M^n_n\;,\\
&\dot{x}^n=\alpha -x^n\sum_{i=0}^{n-1} k_iM^n_i+\sum_{i=0}^n iq_iM^n_i\,,\\
&x^n(0)=x_0,\quad M_i^n(0)=M_{0i},\quad i=0,1,\dots, n\,.\rule{0mm}{6mm}
\end{aligned}
\right.
\end{equation}

\medskip

For the truncated versions of $\mathcal{M}$ and $\mathcal{X}$, $\mathcal{M}^n$ and $\mathcal{X}^n$ say, we have:
\begin{align*}
\dot{\mathcal{M}}^n&=\sum_{i=0}^n \dot{M}^n_i=\dot{M}^n_0+\sum_{i=1}^{n-1} \dot{M}^n_i+\dot{M}^n_n
= r-\sum_{i=0}^n (p_i+q_i)M^n_i\,,\\
\dot{\mathcal{X}}^n&=\dot{x}^n+\sum_{i=1}^{n-1} i\dot{M}^n_i+n\dot{M}^n_ n
= \alpha -\sum_{i=1}^n ip_iM^n_i\,.
\end{align*}
Therefore,
$$
\dot{\mathcal{U}}^n=r+\alpha-\sum_{i=0}^n (p_i+q_i)M^n_i -\sum_{i=1}^n ip_iM^n_i\,.
$$
For now, let's only assume that the coefficients $k_i$, $p_i$ and $q_i$ in \eqref{siltrunc}, as well as the constants $r$ and $\alpha$, are nonnegative.

\medskip

Since \eqref{siltrunc} is an initial value problem for a finite dimensional ODE with a smooth vector field, 
the standard Picard-Lindel\"{o}ff theorem allows us to conclude that, for each initial condition
$y^n_0 = (x_0^n, M_0^n, M_1^n, \ldots, M_0^n)\in\mathbb{R}^{n+2}$, there is a unique local solution in the classical sense, 
$y^n_i(\cdot):=(x^n(\cdot), M^n_0(\cdot), M^n_1(\cdot),\dots,M^n_n(\cdot))$,
such that $y^n(0)=y^n_0$. Moreover, we have:


\begin{lemma}\label{truncexist}
Let $n\in\mathbb{N}_2$. For each $y^n_0\in(\mathbb{R}_0^+)^{n+2}$, the unique local 
solution of \eqref{siltrunc} satisfying $y^n(0)=y^n_0$, 
$y^n(\cdot)$,  is extendable to the interval $[0,+\infty)$, and for each $t\geqslant 0,$ 
$y^n(t)\in (\mathbb{R}_0^+)^{n+2}.$ This solution satisfies, for all $t\geqslant 0,$
\begin{equation}\label{ynorm1}
\dot{\mathcal{U}}^n=r+\alpha-\sum_{i=0}^n (p_i+q_i)M^n_i -\sum_{i=1}^n ip_iM^n_i\leqslant r+\alpha\,.
\end{equation}
Furthermore, assume that $(g_i)\in\mathbb{R}^{n+2}.$ Then, for each positive integer $m<n$,
\begin{multline}\label{moments}
\sum_{i=m}^n g_i\dfrac{dM^n_i}{dt}+\sum_{i=m}^n g_i(p_i+q_i)M^n_i=\\ 
= g_mx^nk_{m-1}M^n_{m-1}
+\sum_{i=m}^{n-1} (g_{i+1}-g_i)x^nk_iM^n_i\,.
\end{multline}
\end{lemma}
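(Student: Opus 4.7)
The argument splits into three essentially independent pieces that I would tackle in order.

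\emph{Local existence, uniqueness, and positive invariance.} Since \eqref{siltrunc} is a polynomial vector field on $\mathbb{R}^{n+2}$, Picard--Lindel\"off gives a unique local classical solution. To show that $(\mathbb{R}_0^+)^{n+2}$ is positively invariant I would invoke quasi-positivity: on the boundary of the cone, whenever one coordinate vanishes while the others remain nonnegative, the associated time derivative is nonnegative. Direct inspection of \eqref{siltrunc} yields $\dot{M}_0^n|_{M_0^n=0}=r$, $\dot{M}_i^n|_{M_i^n=0}=k_{i-1}x^n M_{i-1}^n$ for $1\leqslant i\leqslant n$, and $\dot{x}^n|_{x^n=0}=\alpha+\sum_{i=0}^n iq_iM_i^n$, each nonnegative on the cone under the sign hypotheses on the coefficients. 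A standard perturbation argument (replace $y^n_0$ by $y^n_0+\varepsilon\mathbf{1}$, use continuous dependence to keep every component strictly positive on a short interval, and let $\varepsilon\downarrow 0$) then gives $y^n(t)\geqslant 0$ throughout the maximal interval of existence.

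\emph{Global extension and the norm bound.} The equality in \eqref{ynorm1} is exactly the identity derived in the display preceding the lemma: direct substitution of \eqref{siltrunc} into $\dot{\mathcal{M}}^n$ and $\dot{\mathcal{X}}^n$ followed by telescoping (a shift of index in $\sum_{i\geqslant 1}ik_{i-1}x^n M_{i-1}^n$ against $\sum_{i\geqslant 1}ik_i x^n M_i^n$ leaves $\sum_{i=0}^{n-1}k_i x^n M_i^n$, which cancels the gain term in $\dot{x}^n$). Combined with the nonnegativity just proved and the sign assumptions, every subtracted term in \eqref{ynorm1} is nonnegative, so $\dot{\mathcal{U}}^n\leqslant r+\alpha$. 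Integrating gives $\mathcal{U}^n(t)\leqslant \mathcal{U}^n(0)+(r+\alpha)t$, which controls the Euclidean norm of $y^n(t)$ on $(\mathbb{R}_0^+)^{n+2}$ and precludes finite-time blow-up; the standard maximal-interval continuation criterion then extends the solution to $[0,+\infty)$.

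\emph{The moment identity \eqref{moments}.} Adding $(p_i+q_i)M_i^n$ to $\dot{M}_i^n$ cancels the death/escalator loss in each equation, leaving only phagocytosis contributions; multiplying by $g_i$ and summing from $i=m$ to $i=n$ yields
\[
\sum_{i=m}^{n-1} g_i\bigl(k_{i-1}x^n M_{i-1}^n-k_i x^n M_i^n\bigr)+g_n k_{n-1}x^n M_{n-1}^n.
\]
A shift $j=i-1$ in the gain sum (legitimate because $m\geqslant 1$ keeps $r$ out of the computation, and because $k_n=0$ makes the $i=n$ equation conform to the same pattern) regroups everything into the coefficient of $x^n k_i M_i^n$ for each $i\in\{m-1,\ldots,n-1\}$, producing the stated right-hand side. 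The main technical subtlety is the quasi-positivity step in the first piece; everything that follows is bookkeeping that parallels the computations of $\dot{\mathcal{M}}^n$ and $\dot{\mathcal{X}}^n$ already carried out in the text.
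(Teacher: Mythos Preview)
Your outline is essentially the paper's proof: quasi-positivity for invariance of the cone, the $\dot{\mathcal{U}}^n$ identity for the a~priori bound and hence global continuation, and an Abel-type resummation for \eqref{moments}. One point deserves correction. In your positivity step you perturb the \emph{initial data} by $\varepsilon\mathbf{1}$, but the quasi-positivity you verified gives only $\dot M_i^n|_{M_i^n=0}=k_{i-1}x^nM_{i-1}^n\geqslant 0$, which may vanish; perturbing $y_0^n$ alone does not rule out a trajectory grazing the boundary later, and ``continuous dependence'' does not supply the missing strict inequality. The paper (following Hsieh--Sibuya, Theorem~III-4-5) instead perturbs the \emph{vector field}, adding $\varepsilon>0$ to every right-hand side: then any vanishing component of the perturbed solution has strictly positive derivative, so the perturbed solution is strictly positive on its whole interval of existence, and continuous dependence on the right-hand side as $\varepsilon\downarrow 0$ yields $y^n(t)\geqslant 0$. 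With that adjustment your argument coincides with the paper's.
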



\begin{proof}
Equations \eqref{siltrunc} have the property that allows us to apply the argument in the proof of Theorem III-4-5 in \cite{h&s} 
to prove the nonnegativity property, that is, by adding any $\varepsilon>0$ to all equations, and being 
$y^{n,\varepsilon}$ the corresponding solution, for any $t_0\geqslant 0,$ such that $y^{n,\varepsilon}(t_0)\in (\mathbb{R}_0^+)^{n+2}$ 
and, for some $j\in\mathbb{N},$ $y^{n,\varepsilon}_j(t_0)=0$, then, $\dot{y}^{n,\varepsilon}_j(t_0)>0$.

\medskip

Since \eqref{ynorm1} has been proved before, then we can say that $y^n$ is 
bounded in any bounded subset of its maximal domain of existence. This, together
 with the smoothness of the vector field in $\mathbb{R}^{n+2}$, implies that this solution is extendable to $[0,+\infty).$

\medskip

Equation \eqref{moments} is easily obtained by multiplying the equation for each $\dot{M}^n_i$ in \eqref{siltrunc} by $g_i$ and subsequently adding them together.
\end{proof}



\section{Existence}

We consider the existence theorem for the Cauchy problem \eqref{silsys} for a relevant class of rate coefficients and initial conditions.
In the proof we start by obtaining a candidate to a local solution as the limit of a subsequence of solutions of the truncated systems. 
This is done by establishing estimates that allow us to apply Ascoli-Arzelà theorem. Then, we pass to the limit in the integral version 
of the truncated system by using some a priori estimates obtained via Gronwall's inequality.
These ideas were originally used in the seminal paper \cite{BCP} on the Becker-D\"oring system.   

\begin {theorem}\label{existence}
Let $(g_i)_{i\in\mathbb{N}_0}$ and $(k_i)_{i\in\mathbb{N}_0}$ be nonnegative sequences such that, for some $\delta>0,$ 
$g_{i+1}-g_i\geqslant \delta$, $i=0,1,2,\dots$, and furthermore,
$$(g_{i+1}-g_i)k_i=O(g_i),\quad\text{for}\quad i=0,1,2,\dots. $$
Let $(p_i)_{i\in\mathbb{N}_0}$ and $(q_i)_{i\in\mathbb{N}_0}$ be two arbitrary nonnegative sequences.

Then, for $y_0=(x_0,M_{00},M_{01},\dots)\in X_+$, such that, $\sum_{i=0}^\infty g_iM_{0i}<\infty$, 
there exists a solution $y=(x,M_{0},M_{1},\dots)$ of \eqref{silsys} on $[0,+\infty)$ with $y(0)=y_0$, 
that satisfies, for any $T\in (0,\infty)$,
\begin{equation} \label{solest}
\sup_{t\in [0,T]}\sum_{i=0}^\infty g_iM_i(t)<\infty,\qquad
\int_0^T\sum_{i=1}^\infty g_i(p_i+q_i)M_i(s)\,ds<\infty\,.
\end{equation}
\end{theorem}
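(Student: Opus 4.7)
My plan is to adapt the classical truncation approach used in \cite{BCP} for the Becker-D\"oring equations. For each $n\geq 2$, I set $y_0^n:=(x_0,M_{00},\ldots,M_{0n})\in(\mathbb{R}_0^+)^{n+2}$ and invoke Lemma~\ref{truncexist} to obtain the nonnegative global solution $y^n$ of the truncated system \eqref{siltrunc}. Integrating \eqref{ynorm1} yields the uniform bound $\|y^n(t)\|\leq B(T):=\|y_0\|+(r+\alpha)T$ on $[0,T]$; in particular $x^n(t)\leq B(T)$.

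Next, I derive the weighted moment estimate by applying the identity \eqref{moments} with $m=1$ and the weights $g_i$. The hypothesis $(g_{i+1}-g_i)k_i\leq Cg_i$ together with $x^n\leq B(T)$ turns the identity into the differential inequality
\[
\frac{d}{dt}\sum_{i=1}^n g_iM_i^n+\sum_{i=1}^n g_i(p_i+q_i)M_i^n\leq g_1k_0B(T)M_0^n+CB(T)\sum_{i=1}^n g_iM_i^n,
\]
and Gronwall's lemma yields the $n$-independent bounds $\sup_{t\in[0,T]}\sum_i g_iM_i^n(t)\leq K_1(T)$ and $\int_0^T\sum_i g_i(p_i+q_i)M_i^n(s)\,ds\leq K_2(T)$.

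From $g_{i+1}-g_i\geq\delta$ I deduce the coarse bounds $k_i\leq(C/\delta)g_i$ and $i\leq g_i/\delta$, so the right-hand side of each equation in \eqref{siltrunc} is uniformly bounded (in $L^1([0,T])$ for $\dot{x}^n$) and each coordinate sequence $(M_i^n)_n$ and $(x^n)_n$ is equicontinuous on $[0,T]$. A diagonal Ascoli-Arzel\`a extraction supplies a subsequence (not relabelled) along which $M_i^n\to M_i$ and $x^n\to x$ uniformly on each $[0,T]$, for every $i\in\mathbb{N}_0$; Fatou's lemma then yields \eqref{solest} for the limit $y=(x,M_0,M_1,\ldots)\geq 0$.

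The final step is to pass to the limit in the integral equations of Definition~\ref{defsol}. For \eqref{M0t} and \eqref{Mit} with $i$ fixed, only finitely many components enter and the limit follows at once from uniform convergence combined with $x^n\leq B(T)$. The main obstacle is \eqref{xt}, where the infinite series $x^n\sum_ik_iM_i^n$ and $\sum_iiq_iM_i^n$ must be interchanged with the $n$-limit. Splitting each series at a threshold $N$, the head is treated by the uniform convergence already obtained, while the tails are dominated by $(C/\delta)\sum_{i>N}g_iM_i^n$ and by $(1/\delta)\sum_{i>N}g_iq_iM_i^n$ respectively. To make these tails uniformly small I would combine the $L^1$-bound $K_2(T)$ from step~2 with a separate application of \eqref{moments} starting from index $m=N+1$, which gives a tail Gronwall estimate whose initial datum $\sum_{i>N}g_iM_{0i}$ vanishes as $N\to\infty$ by the hypothesis $\sum g_iM_{0i}<\infty$. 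Reconciling this tail estimate with the growth of the coefficient $g_{N+1}k_N$ on its right-hand side is the genuinely delicate point of the proof, and is where the full strength of the hypothesis $(g_{i+1}-g_i)k_i=O(g_i)$ must be exploited.
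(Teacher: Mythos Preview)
Your overall strategy---truncation, the uniform norm bound $\|y^n(t)\|\leq B(T)$, the weighted Gronwall estimate, Ascoli--Arzel\`a, then passage to the limit---matches the paper's, and the first two steps are correct. The gap is at step three: a uniform $L^1$ bound on $\dot{x}^n$ does \emph{not} give equicontinuity of $(x^n)$. (Consider $f_n(t)=\min(nt,1)$: the derivatives satisfy $\|f_n'\|_{L^1}=1$ but the family is not equicontinuous at $0$.) The term $\sum_i iq_iM_i^n$ is controlled only in $L^1_t$ by your $K_2(T)$; you have no pointwise or equi-integrability control, so Ascoli--Arzel\`a does not apply to $(x^n)$. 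The paper avoids this by extracting first only a weak-$*$ limit $x^{n_j}\stackrel{*}{\rightharpoonup}x$ in $L^\infty(0,T)$ (using just $x^n\leq B(T)$), which already suffices to pass to the limit in \eqref{M0t}--\eqref{Mit}, and only afterwards upgrades to uniform convergence by showing $(x^{n_j})$ is Cauchy in $C([0,T/2])$.

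The Cauchy argument is precisely where the difficulty you label ``genuinely delicate'' lives, and your sketch stops short of the mechanism needed to resolve it. The naive bound on the source term in the tail moment equation, $g_{N+1}k_Nx^nM_N^n\leq (C/\delta)g_{N+1}g_NB(T)/(N+1)$, need not vanish as $N\to\infty$ (take $g_i=i^2$). The paper's device is to first derive the limit identity \eqref{sumMi2}, set $f_m(t):=g_m\sum_{i\geq m}(M_i(t)-M_{0i})$, and use $\sum_{i\geq m}g_iM_i(t)\leq K_1(T)$ together with dominated convergence to get $\int_0^T|f_N|\,dt<\varepsilon$ for $N$ large. Then they integrate the moment identity \eqref{moments} \emph{twice} in time, so that the troublesome source term becomes $\int_0^t f_N^{n_j}(s)\,ds$, which is shown to be $<2\varepsilon$; a Gronwall argument on $z(t)=\int_0^t\bigl(\sum_{i\geq N}g_iM_i^{n_j}+\int_0^s\sum_{i\geq N}g_i(p_i+q_i)M_i^{n_j}\bigr)ds$ finally yields the uniform tail smallness \eqref{intest}. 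Without this double-integration trick the tail estimate does not close.
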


\medskip

\begin{proof}
	Let $y^n(0)=(x_0,M_{00},M_{01},\dots, M_{0n})$. By Lemma \ref{truncexist} there exists a unique solution $y^n$ of \eqref{siltrunc}
           defined on $[0,\infty)$, with $y^n_r(t)\geqslant 0$ for $1\leqslant r \leqslant n+2,$ and satisfying 
	$
	\mathcal{U}^n(t)\leqslant \mathcal{U}^n(0)+(r+\alpha)t,
	$
 	for $t\geqslant 0$.

\medskip

We shall regard $y^n$ as an element of $X_+$, by considering $y^n_r(t)\equiv 0$, if $r> n+2$, or equivalently $ M^n_r(t) \equiv 0$ if $r>n$.
With this identification we have $\|y^n\|=\mathcal{U}^n$, and therefore,  for all $t\geqslant 0,$ 
	\begin{equation}\label{ynnorm}
	\|y^n(t)\|\leqslant \|y^n(0)\|+(\alpha+r)t\leqslant \|y_0\|+(\alpha+r)t.
	\end{equation}

	\medskip
	
	Fix $T\in \mathbb{R}^+$ arbitrarily. 
	
	\medskip
	
	Since, for each $i=0,1,2,\dots,n$, we obviously have $(i+1)M^n_i(t)\leqslant \|y^n(t)\|,$ we conclude that, for all $t\in[0,T]$,
	\begin{equation}\label{Minest}
	0\leqslant M^n_i(t)\leqslant \frac{\|y_0\|}{1+i}+\frac{\alpha+r}{1+i}t\leqslant  \frac{\|y_0\|}{1+i}+\frac{\alpha+r}{1+i}T\,.
	\end{equation}
	
	Now, in order to apply Ascoli-Arzel\`a theorem, we need estimates on $|\dot{M}^n_i|$, $i=0,1,\dots,n$. From \eqref{siltrunc}
we have, for all $t> 0$,
	\begin{equation}\label{dotMinest}
	\begin{aligned}
	|\dot{M}^n_0(t)|&\leqslant r + (k_0 x^n(t)+(p_0+q_0))M_0^n(t),\\
	|\dot{M}^n_i(t)|&\leqslant k_{i-1} x^n(t)M^n_{i-1}(t)+k_i x^n(t) M^n_i(t)+(p_i+q_i)M_{i}^n(t),\\
	|\dot{M}^n_n(t)|&\leqslant k_{n-1} x^n(t)M^n_{n-1}(t)+(p_n+q_n)M_{n}^n(t)\,.
	\end{aligned}
	\end{equation} 
By the estimates \eqref{ynnorm}, \eqref{Minest} and \eqref{dotMinest} with $t\in [0,T]$, and taking into account 
that $x^n\leqslant \|y^n\|$, we conclude
\begin{equation}\label{dotMinestT}
	\begin{aligned}
	|\dot{M}^n_0(t)|&\leqslant  r+\left[k_0(\|y_0\|+(\alpha+r)T)+p_0+q_0\right](\|y_0\|+(\alpha+r)T),\\
	|\dot{M}^n_i(t)|&\leqslant
	 \left[(\dfrac{k_{i-1}}{i}+\dfrac{k_i}{i+1})(\|y_0\|+(\alpha+r)T)+\dfrac{p_i+q_i}{i+1}\right]\times\\
	&\hspace*{5mm}\times (\|y_0\|+(\alpha+r)T), \quad\text{for } i=1,2,\dots,n.
	\end{aligned}
	\end{equation}
	An important fact to be retained from estimates \eqref{dotMinestT} is that, for each $i=0,1,2,\dots$, $|\dot{M}^n_i|$ 
is bounded uniformly in $n$, and therefore the set of functions $M^n_i(\cdot)$, $n=1,2,\dots,$  forms an equibounded and 
equicontinuous set of functions in $[0,T].$  Therefore, by applying Ascoli-Arzel\`a theorem, and by a standard diagonalization 
procedure, we can establish the existence, for each $i=0,1,2,\dots$, of a continuous function $M_i:[0,T]\to \mathbb{R}$, 
and a strictly increasing sequence of natural numbers $(n_j)$, such that, 
$$
M_ i^{n_j}\to M_i,\text{ uniformly on  } [0,T], \text{ as }j\to\infty\,.
$$ 
Obviously, $M_i(t)\geqslant 0$, for all $t\in [0,T].$ On the other hand, since by \eqref{ynorm1}
$$
\sum_{i=0}^N (i+1)M_i(t)=\lim_{j\to\infty}\sum_{i=0}^N (i+1)M^{n_j}_i(t)\leqslant \|y_0\|+(\alpha+r)T,
$$
for each finite $N$, we conclude that, for $t\in[0,T],$
\begin{equation}\label{iMiest}
\sum_{i=0}^\infty (i+1)M_i(t)\leqslant \|y_0\|+(\alpha+r)T\,.
\end{equation}

\medskip

Considering now the sequence $(x^n(\cdot))$, observe that the equation for $\dot{x}^n$ in system \eqref{siltrunc} contains 
sums which are not clear how to control  by estimates similar to the  ones above as $n\to \infty$. 
Therefore, we adopt the following different approach already used in the proof of Theorem 2.2 in \cite{BCP}:  Since, for $t\in[0,T]$, 
\begin{equation}\label{xnest}
|x^n(t)|\leqslant \|y^n(t)\|\leqslant \|y_0\|+(\alpha+r)T,
\end{equation}
we can extract a subsequence of $(n_j)$, which we still designate by $(n_j),$ such that, for some non-negative $x\in L^{\infty}(0,T)$,
$$
x^{n_j}	\stackrel{\ast}{\rightharpoonup} x\;\; \text{ in }L^\infty(0,T), \quad\text{ as } j\to\infty\,,
$$
which means that
\begin{equation}\label{weakstar}
\forall \phi\in L^1(0,T),\quad\int_0^T(x^{n_j}(t)-x(t))\phi(t)\,dt \to 0, \quad \text{ as } j\to\infty\,.
\end{equation}

\medskip

Now, from \eqref{siltrunc}, we have that, for all $t\geqslant 0$,
\begin{equation}\label{Mnk0}
M^{n_j}_0(t)=M_{00}+rt-\int_0^t\left[ k_0x^{n_j}(s)M^{n_j}_0(s)-(p_0+q_0)M^{n_j}_0(s))\right]ds\,.
\end{equation}
Since, for all $t\in [0,T],$
$$
\int_0^t x^{n_j}(s)M^{n_j}_0(s)\,ds =
\int_0^t x^{n_j}(s)M_0(s)\,ds+\int_0^t x^{n_j}(s)(M^{n_j}_0(s)-M_0(s))\,ds,
$$
using the convergence of $M^{n_j}_0\to M_0$, uniformly on $[0,T]$, and the weak$^*$ convergence \eqref{weakstar} 
combined with estimate \eqref{xnest}, we can pass to the limit in  \eqref{Mnk0} to obtain, for all $t\in [0,T],$
$$
M_0(t)=M_{00}+rt-\int_0^t\left[ k_0x(s)M_0(s)-(p_0+q_0)M_0(s))\right]ds\,,
$$
thus proving \eqref{M0t} in Definition~\ref{defsol}. Now, fix $i\in\mathbb{N}$. For $j$ sufficiently large, by \eqref{siltrunc},
\begin{multline}\label{Mnki}
M^{n_j}_i(t)=\\=M_{0i}+\int_0^t\left[k_{i-1}x^{n_j}(s)M^{n_j}_{i-1}(s)
-k_{i}x^{n_j}(s)M^{n_j}_{i}(s)-(p_i+q_i)M^{n_j}_i(s)\right]ds\,.
\end{multline}
Using the same arguments, now with the uniform convergence $M^{n_j}_i\to M_i$, on $[0,T]$, $i=0,1,2,\dots$, 
we can pass to the limit in \eqref{Mnki} to obtain
\begin{equation*}
M_i(t)=M_{0i}+\int_0^t\left[ k_{i-1}x(s)M_{i-1}(s)-k_{i}x(s)M_{i}(s)-(p_i+q_i)M_i(s)\right]ds\,,
\end{equation*}
thus proving \eqref{Mit} in Definition \ref{defsol}. 

\medskip

In order to prove the other assertions in Definition \ref{defsol} and, in particular, 
to pass to the limit in the integral version of the last equation of \eqref{siltrunc} 
we need some further estimates. Using estimates \eqref{iMiest} and \eqref{xnest} in 
the moments' equation \eqref{moments} with $m=1$, and using the hypothesis of the theorem, 
we obtain\footnote{We recall the reader that for the sequences $(M_i^{n_j})$ we are considering in this
proof we have $M_i^{n_j}=0$ if $i>n_j$ and hence all the sums up to $\infty$ that involve these sequences
have, in fact, only a finite number of terms with $i$ up to $i=n_j$.}, after integration, for $t\in [0,T],$
\begin{multline}
\sum_{i=1}^{\infty} g_iM^{n_j}_i(t)+\int_{0}^t\sum_{i=1}^{\infty} g_i(p_i+q_i)M^{n_j}_i(s)\,ds\leqslant 
C_1+C_2\int_{0}^t\sum_{i=1}^{\infty} g_iM^{n_j}_i(s)\,ds\,, \label{novaeq}
\end{multline}
where $C_1:= k_0g_1(C_2/C)^2T+\sum_{i=1}^{\infty} g_iy_{0i},$ $C_2:= \|y_0\|+(\alpha + r)T,$ and
$C$ is such that $|g_{i+1}-g_i|k_i \leqslant Cg_i.$ 
Then, due to the nonnegativity of $(g_{i})$, $(p_i)$, $(q_i)$ and $(M_i^{n_j})$, we
can apply Gronwall's lemma to the above inequality to deduce that, for all $t\in [0,T],$
\begin{equation}\label{gronwall}
\sum_{i=1}^{\infty} g_iM^{n_j}_i(t)+\int_{0}^t\sum_{i=1}^{\infty} g_i(p_i+q_i)M^{n_j}_i(s)\,ds\leqslant C_1e^{C_2t}.
\end{equation}
Now, consider any integer $N\geqslant 2.$ Again, by the nonnegativity referred to above, 
the bound \eqref{gronwall} implies that, for all $j$,
$$
\sum_{i=1}^{N} g_iM^{n_j}_i(t)+\int_{0}^t\sum_{i=1}^{N} g_i(p_i+q_i)M^{n_j}_i(s)\,ds\leqslant C_1e^{C_2t}\,,
$$
so that, by making $j\to\infty$, we obtain, due to the uniform convergence $M_i^{n_j}\to M_i$ in $[0,T],$ for $i=1,2,\dots,$
$$
\sum_{i=1}^{N} g_iM_i(t)+\int_{0}^t\sum_{i=1}^{N} g_i(p_i+q_i)M_i(s)\,ds\leqslant C_1e^{C_2t}\,.
$$
Then, taking $N\to\infty$, we conclude that
\begin{equation}\label{giMiest}
\sum_{i=1}^{\infty} g_iM_i(t)+\int_{0}^t\sum_{i=1}^{\infty} g_i(p_i+q_i)M_i(s)\,ds\leqslant C_1e^{C_2t}\,,
\end{equation}
where we have used the monotone convergence theorem for the limit of the integral. This clearly proves assertions \eqref{solest}. 
Using the assumptions on $(g_i)$, namely that for all $i=0,1,2,\dots$, we have $\delta k_i\leqslant (g_{i+1}-g_i)k_i\leqslant Cg_i,$
 and  by \eqref{giMiest} we can write 
$$
\int_0^t\sum_{i=0}^\infty k_iM_i(s)\,ds<\infty, \quad\text{ for all } t\in[0,T).
$$
Also, from $g_{i+1}-g_i\geq \delta>0$ we get $g_i\geqslant g_0 +i\delta$
for $i=1,2,\dots,$ and hence, by \eqref{giMiest},
$$
\int_0^t\sum_{i=0}^\infty i(p_i+q_i)M_i(s)\,ds<\infty,\quad\text{ for all } t\in[0,T).
$$
This proves the second assertion in (iii) in Definition \ref{defsol}. The first assertion of (iii) is a direct 
consequence of \eqref{weakstar} and the uniform boundedness of $x^{n_j}$, $j\in\mathbb{N}$. 
Since properties (i), (ii)  are easily obtained for the coordinates $y_i$, $i=2,3,\dots$, that is, 
for $M_i$, $i=0,1,2,\dots$, by the uniform convergence properties, what is still lacking are the 
corresponding ones for $y_1$, that is, for $x$, and \eqref{xt} in Definition \ref{defsol}.

\medskip

Our next main goal is to prove that $(x^n_j)$ is a Cauchy sequence in the uniform convergence norm. From the
integrated version of \eqref{siltrunc} and letting $\chi_{i,p}$ be the indicator function of $\{0, 1,\ldots, n_p-1\}$, we can
write, for $j>l\in\mathbb{N}$ and $t\in[0,T],$
\begin{equation}\label{xnkxnl}
\begin{aligned}
x^{n_j}(t)-x^{n_l}(t)=&
-\int_0^t \biggl(x^{n_j}(s)\sum_{i=0}^{\infty}k_iM^{n_j}_i(s)\chi_{i,j}-x^{n_l}(s)\sum_{i=0}^{\infty}k_iM^{n_l}_i(s)\chi_{i,l}\biggr)ds\\
&+\int_0^t\biggl(\sum_{i=0}^{\infty}iq_iM^{n_j}_i(s)-\sum_{i=0}^{\infty}iq_iM^{n_l}_i(s)\biggr)\,ds\\
=& \;I_1+I_2+I_3\,,
\end{aligned}
\end{equation}
where,
\begin{equation}\label{I1I2I3}
\begin{aligned}
I_1:&=-\int_0^t (x^{n_j}(s)-x^{n_l}(s))\sum_{i=0}^{\infty}k_iM^{n_j}_i(s)\chi_{i,j}\,ds\,,\\
I_2:&=-\int_0^t x^{n_l}(s)\sum_{i=0}^{\infty}k_i\left(M^{n_j}_i(s)-M^{n_l}_i(s)\right)\chi_{i,j}\,ds\,,\\
I_3:&=\int_0^t\sum_{i=0}^{\infty}iq_i\left(M^{n_j}_i(s)-M^{n_l}_i(s)\right)\,ds\,.
\end{aligned}
\end{equation}
We will deal first with $I_2$ and $I_3$.
Given $N\in \mathbb{N}$, and for $j>l$ sufficiently large, we use the splitting
$\sum_{i\geqslant 0}=\sum_{i=0}^{N-1}+\sum_{i\geqslant N}$. We start by proving that the parts of $I_2$ and $I_3$ that
correspond to the sums with $i\geqslant N$ can be made arbitrarily small by choosing $j$ and $l$ sufficiently large.
In order to achieve this we will now deduce a number of adequate estimates.

\medskip

Using again \eqref{moments} with $2 \leqslant m<n_j$ and with $g_i=1$, for all $i\in\mathbb{N}$, we can write after integration
(remembering that $M^{n_j}_i=0$ for $i>n_j$)
\begin{multline}\label{sumMi}
\sum_{i=m}^{\infty} M^{n_j}_i(t)-\sum_{i=m}^{\infty} M^{n_j}_{0i} =\\
=\int_0^t x^{n_j}(s)k_{m-1}M^{n_j}_{m-1}(s)\,ds
-\int_0^t\sum_{i=m}^{\infty} (p_i+q_i)M^{n_j}_i(s)\,ds.
\end{multline}
Next, we pass to the limit $n_j\to\infty$ in \eqref{sumMi}. Write the integrand of the first integral in the form 
$$x^{n_j}(s)M^{n_j}_{m-1}(s)=x^{n_j}(s)M_{m-1}(s)+x^{n_j}(s)\left(M^{n_j}_{m-1}(s)-M_{m-1}(s)\right).$$
By \eqref{weakstar},
$$
\int_0^t x^{n_j}(s)M_{m-1}(s)\,ds\; \to \; \int_0^t x(s)M_{m-1}(s)\,ds,\quad\text{as }j\to\infty\,.
$$
On the other hand, by estimate \eqref{xnest} and the convergence $M^{n_j}_{m-1}-M_{m-1}\to 0,$ as $n_j\to\infty,$ uniformly on $[0,T]$,
$$
\int_0^t x^{n_j}(s)\left(M^{n_j}_{m-1}(s)-M_{m-1}(s)\right)\,ds\; \to \;  0,\quad\text{as }j\to\infty\,.
$$
We can then conclude that
\begin{equation}\label{int1}
\int_0^t x^{n_j}(s)k_{m-1}M^{n_j}_{m-1}(s)\,ds\; \to \; \int_0^t x(s)k_{m-1}M_{m-1}(s)\,ds,\;\text{ as }j\to\infty\,.
\end{equation}
For the second integral in the right-hand side of \eqref{sumMi} we observe that, for every $\widetilde{m}>0$  and due to
\eqref{gronwall}, the following holds uniformly in $n_j$ and $t\in [0, T]$
\begin{equation}\label{boundM}
\begin{aligned}
\int_0^t\sum_{i=\widetilde{m}}^{\infty} (p_i+q_i)M^{n_j}_i(s)\,ds 
&\leqslant\frac{1}{\widetilde{m}}\int_0^t \sum_{i=\widetilde{m}}^{\infty} i(p_i+q_i)M^{n_j}_i(s)\,ds\\
&\leqslant \frac{1}{\widetilde{m}}\int_0^t \sum_{i=1}^{\infty} i(p_i+q_i)M^{n_j}_i(s)\,ds\\
&\leqslant \frac{C_1}{\widetilde{m}}\int_0^te^{C_2s}ds\\
& \leqslant  \frac{C_1}{\widetilde{m}C_2}\bigl(e^{C_2T}-1\bigr) \;\to \; 0\;\text{as $\widetilde{m}\to\infty$}.
\end{aligned}
\end{equation}
For every $\ve>0$ fix $\widetilde{m}$ such that the last right-hand side in \eqref{boundM} is less than $\frac{1}{2}\ve.$ By the uniform in $t$
convergence of $M^{n_j}_{i}$ to $M_{i}$ we conclude that there exists a $\nu=\nu(\widetilde{m})$ such that, for all $t\in [0, T]$ and $n_j>\nu$,
\begin{equation}\label{boundM2}
\left|\int_0^t\sum_{i=m}^{\widetilde{m}-1}(p_i+q_i)M_i^{n_j}(s)ds - \int_0^t\sum_{i=m}^{\widetilde{m}-1}(p_i+q_i)M_i(s)ds\right| < \frac{1}{2}\ve.
\end{equation}
Thus, \eqref{boundM} and \eqref{boundM2} imply that for every $\ve>0$ there exists $\widetilde{m}$ and $\nu$ such that, for all $t\in [0, T]$
and $n_j>\nu$, we have
\[
\left|\int_0^t\sum_{i=m}^{\infty}(p_i+q_i)M_i^{n_j}(s)ds - \int_0^t\sum_{i=m}^{\widetilde{m}-1}(p_i+q_i)M_i(s)ds\right| < \ve
\]
and, due to the arbitrariness of $\widetilde{m}$, we conclude that, as $n_j\to\infty$,
\begin{equation}\label{ipiqiMi}
\int_0^t\sum_{i=m}^{\infty} (p_i+q_i)M^{n_j}_i(s)\,ds\to
\int_0^t\sum_{i=m}^{\infty} (p_i+q_i)M_i(s)\,ds\,.
\end{equation}
Next we will deal with the convergence of the left-hand side of \eqref{sumMi}. Again let us consider a fixed $\widetilde{m}>m$.
Then,  for  large $j$ so that $n_j>\widetilde{m}$, write
\begin{equation}\label{sumMnkM}
\sum_{i=m}^{\infty}M^{n_j}_i(t)-\sum_{i=m}^{\infty}M_i(t)=\left(\sum_{i=m}^{\widetilde{m}-1}+\sum_{i=\widetilde{m}}^{\infty}\right)\left(M^{n_j}_i(t)-M_i(t)\right)\,.
\end{equation}
Estimates \eqref{ynnorm} and \eqref{iMiest} imply that
$$
\sum_{i=\widetilde{m}}^{\infty}\left|M^{n_j}_i(t)-M_i(t)\right|
\leqslant \frac{1}{\widetilde{m}+1}\sum_{i=\widetilde{m}}^\infty (i+1)\left(M^{n_j}_i(t)+M_i(t)\right)\leqslant \frac{2(\|y_0\|+(\alpha+r)T)}{\widetilde{m}+1}\,,
$$
so that, for each $t\in[0,T]$, the series $\sum_{i=\widetilde{m}}^\infty$ in the right-hand side of 
\eqref{sumMnkM} converges, uniformly with respect to $j\in\mathbb{N},$ which allows us to conclude that, 
$$
\lim_{j\to\infty}\left(\sum_{i=m}^{\widetilde{m}-1}+\sum_{i=\widetilde{m}}^{\infty}\right)\left(M^{n_j}_i(t)-M_i(t)\right)=0\,,
$$
and so,
\begin{equation}\label{limMnk}
\lim_{j\to\infty}\sum_{i=m}^{\infty} M^{n_j}_i(t)=\sum_{i=m}^{\infty}M_i(t)\,.
\end{equation}
Plugging \eqref{int1}, \eqref{ipiqiMi} and \eqref{limMnk} into \eqref{sumMi} we obtain
\begin{equation}\label{sumMi2}
\sum_{i=m}^{\infty} M_i(t)-\sum_{i=m}^{\infty} M_{0i}=\int_0^t \Bigl(x(s)k_{m-1}M_{m-1}(s)
-\sum_{i=m}^{\infty} (p_i+q_i)M_i(s)\Bigr)\,ds\,.
\end{equation}
We shall now use \eqref{sumMi2} to obtain an estimate that will be crucial to handle the limit of $I_2$ and $I_3$ in \eqref{I1I2I3} as $n_j\to \infty$.
For each $m\in\mathbb{N}$ and $t\geqslant 0$ define
\begin{equation} \label{fm}
f_m(t):=g_m\times \Bigl(\sum_{i=m}^{\infty} M_i(t)-\sum_{i=m}^{\infty} M_{0i}\Bigr)\,,   
\end{equation}
where $(g_m)$ is a positive sequence satisfying the assumptions of the theorem.
We can write the product of \eqref{sumMi2} by $g_m$ in the following form:
\begin{equation}\label{gmfm}
f_m(t) = g_m\int_0^t\Bigl( k_{m-1}x(s)M_{m-1}(s)
- \sum_{i=m}^{\infty}(p_i+q_i)M_i(s)\Bigr)\,ds\,.
\end{equation}
Now, using the assumption $g_{i+1}-g_i\geqslant \delta>0$, we have, for $t\in[0,T]$,
$$
|f_m(t)|\leqslant \sum_{i=m}^{\infty}g_i M_i(t)+\sum_{i=m}^{\infty} g_iM_{0i}\,,
$$
and, by \eqref{giMiest}, we conclude that, for each $t\in[0,T],$
$$
|f_m(t)|\leqslant C\;,\quad\text{and}\quad \lim_{m\to\infty} f_m(t)=0,
$$
where $C>0$ is independent of $t$ and $m$. Hence, by the bounded convergence theorem, we have
$$
\lim_{m\to\infty} \int_0^T |f_m(t)|\,dt=0\,
$$ 
and so, given $\varepsilon >0$, there exists $N>2$ such that
$$
\int_0^T |f_N(t)|<\varepsilon\;\quad\text{and}\quad\; \sum_{i=N}^{\infty} g_iM_{0i}<\varepsilon\,.
$$
Using this in \eqref{gmfm}, we obtain, for all $t\in[0,T],$
$$
g_N\int_0^t\int_0^s\biggl( x(\tau)k_{N-1}M_{N-1}(\tau)
-  \sum_{i=N}^{\infty}(p_i+q_i)M_i(\tau)\biggr)\,d\tau ds<\varepsilon\,.
$$
Now define $f_m^{n_j}(t)$ by \eqref{fm} using the sequence $(M_i^{n_j})$ instead of $(M_i)$. Thus, we
have the following equality corresponding to \eqref{gmfm},
\begin{equation}\label{gmfmj}
f_m^{n_j}(t) = g_m\int_0^t\Bigl( k_{m-1}x^{n_j}(s)M_{m-1}^{n_j}(s)
- \sum_{i=m}^{\infty}(p_i+q_i)M_i^{n_j}(s)\Bigr)\,ds\,.\nonumber
\end{equation}
Using \eqref{int1} and \eqref{ipiqiMi} we conclude that
\begin{align*}
& \lim_{n_j\to \infty} g_m\int_0^t\Bigl( k_{m-1}x^{n_j}(s)M_{m-1}^{n_j}(s)
- \sum_{i=m}^{\infty}(p_i+q_i)M_i^{n_j}(s)\Bigr)\,ds = \\ 
& = g_m\int_0^t\Bigl( k_{m-1}x(s)M_{m-1}(s)
- \sum_{i=m}^{\infty}(p_i+q_i)M_i(s)\Bigr)\,ds,
\end{align*}
and thus $f_m^{n_j}(t) \to f_m(t),$ pointwise in $[0, T].$ 
Additionally, from the definition of
$f_m^{n_j}(t)$ and \eqref{gronwall} we have
\[
\left|f_m^{n_j}(t)\right| \leqslant \sum_{i=m}^{\infty} g_iM_i^{n_j}(t) + \sum_{i=m}^{\infty} g_iM_{i0} \leqslant 2C_1e^{C_2T},
\]
and consequently, by the dominated convergence theorem,
\[
\int_0^t f_m^{n_j}(s)ds \to \int_0^t f_m(s)ds\quad\text{as $n_j\to\infty$}.
\]
Hence, for every $\varepsilon>0$ there exists $m_0, j_0$ such that, for all $m>m_0$ and $n_j>j_0$ the following hold
\begin{multline}\label{2epsilon}
g_N\int_0^t\int_0^s\biggl( x^{n_j}(\tau)k_{N-1}M^{n_j}_{N-1}(\tau)
-  \sum_{i=N}^{n_j}(p_i+q_i)M^{n_j}_i(\tau)\biggr)\,d\tau ds = \\
=\, \int_0^t  f_m^{n_j}(s)ds \,=\, \int_0^t  f_m(s)ds + \int_0^t  \left(f_m^{n_j}(s)-f_m(s)\right)ds\,<\,2\varepsilon\,.
\end{multline}
Now, we consider the integrated version of the moments' equation \eqref{moments} with $m=N.$ 
Taking into consideration the hypothesis of the theorem, namely that $k_i(g_{i+1}-g_i)=O(g_i) \leqslant K_gg_i$ for some constant $K_g$
independent of $i$, and recalling \eqref{ynnorm}, we can write, for all $t\in [0,T]$, 
\begin{align*}
\int_0^t\biggl(\sum_{i=N}^{\infty} g_iM^{n_j}_i(s)&-\sum_{i=N}^{\infty} g_iM^{n_j}_i(0)+\int_0^s\sum_{i=N}^{\infty} g_i(p_i+q_i)M^{n_j}_i(\tau)\,d\tau\biggr)ds\;= \\
&=\int_0^t \sum_{i=N}^{\infty} g_iM^{n_j}_i(0)\,ds + g_N \int_0^t\int_0^sk_{N-1}x^{n_j}(\tau)M_{N-1}^{n_j}(\tau)d\tau ds\; + \\
&\hspace*{2.5cm} + \int_0^t\int_0^s\sum_{i=N}^\infty k_{i}(g_{i+1}-g_i)x^{n_j}(\tau)M_{i}^{n_j}(\tau)d\tau ds  \\
& \leqslant t \sum_{i=N}^{\infty} g_iM^{n_j}_i(0) + g_N \int_0^t\int_0^sk_{N-1}x^{n_j}(\tau)M_{N-1}^{n_j}(\tau)d\tau ds\; + \\
&\hspace*{2.5cm} + K_gC_2 \int_0^t\int_0^s\sum_{i=N}^\infty g_{i}x^{n_j}(\tau)M_{i}^{n_j}(\tau)d\tau ds
\end{align*}
where, we recall the reader, $C_2:=\|y_0\|+(\alpha+r)T$. Let  
$$
z(t):=\int_0^t\biggl(\sum_{i=N}^{\infty}g_iM^{n_j}_i(s)+\int_0^s \sum_{i=N}^{\infty} g_i(p_i+q_i)M^{n_j}_i(\tau)\,d\tau\biggr)ds\,.
$$
The nonnegativity of $M_i^{n_j}$, the inequality just obtained, and \eqref{2epsilon} imply that, for all $t\in [0,T],$
$$
z(t)\leqslant t\sum_{i=N}^{\infty} g_iM_{i}^{n_j}(0)+2\varepsilon+K_gC_2\int_0^t z(s)\,ds\,.
$$
Applying Gronwall's inequality we get, with $K_1=K_gC_2$ and for all $N$ sufficiently large,
\begin{align*}
z(t)&\leqslant t\sum_{i=N}^{\infty} g_iM_{i}^{n_j}(0)+2\varepsilon +
K_1\int_0^t \left(s\sum_{i=N}^{\infty} g_iM_{i}^{n_j}(0)+2\varepsilon\right)e^{K_1(t-s)}ds \\
&=K_1^{-1}\left(e^{K_1 t}-1\right)\sum_{i=N}^{\infty} g_iM_{i}^{n_j}(0)+2\varepsilon e^{K_1 t}\\
&<\varepsilon(2+K_{1}^{-1})e^{K_1 t}\,.
\end{align*}
Since,
\begin{align*}
\int_0^t \int_0^s \sum_{i=N}^{\infty} g_i(p_i+q_i)M^{n_j}_i(\tau)\,d\tau ds&=
\int_0^t (t-s) \sum_{i=N}^{\infty} g_i(p_i+q_i)M^{n_j}_i(s)\,ds\\
&\geqslant \frac{t}{2}\int_0^{t/2}\sum_{i=N}^{\infty} g_i(p_i+q_i)M^{n_j}_i(s)\,ds
\end{align*}
we finally conclude that, for sufficiently large fixed $N$ and for all $j\geqslant j_0$,
\begin{equation}
\label{intest}
\int_0^t\sum_{i=N}^{\infty}g_iM^{n_j}_i(s)ds
+\frac{t}{2}\int_0^{t/2} \sum_{i=N}^{\infty} g_i(p_i+q_i)M^{n_j}_i(s)\,ds\leqslant
\varepsilon(2+K_1^{-1})e^{K_1 t},
\end{equation}
for all $t\in [0,T]$.

\medskip

We have now enough estimates to deal with the expressions in \eqref{I1I2I3}. We consider the splitting $\sum_{i\geqslant 0}=\sum_{i=0}^{N-1}+\sum_{i\geqslant N}$. 
Using the bounds on $M^{n_j}_i$, the uniform convergence $M^{n_j}_i\to M_i$ on $[0,T]$, for each $i=0,1,2,\dots$, and the weak-star convergence  $x^{n_j}	\stackrel{\ast}{\rightharpoonup} x,$ it is easy to check that the $\sum_{i=0}^{N-1}$ parts of $I_2,\,I_3$ converge to zero uniformly for $t\in[0,T]$, as $j,l\to \infty$. 
On the other hand, by the hypothesis of the theorem we have $k_i=O(g_i),$ and 
from \eqref{intest}, we conclude that, for $j\geqslant l\geqslant j_0$ and all $t\in[0,T]$
\begin{multline*}
\left|\int_0^t x^{n_l}(s)\sum_{i=N}^{\infty}k_i\left(M^{n_j}(s)-M^{n_l}(s)\right)\,ds\right| \leqslant \\
\leqslant 2C_2\int_0^t\sum_{i=N}^{\infty}k_i\left(M^{n_j}(s)+M^{n_l}(s)\right)\,ds
\leqslant K_2 \varepsilon,
\end{multline*}
for some $K_2>0$, independent of $l, j, t$. Furthermore, since $i=O(g_i)$, 
by \eqref{intest} we have, for some $K_3>0$ independent of $l$ and $j$, 
\begin{multline*}
\left|\int_0^t\sum_{i=0}^{\infty}iq_i\left(M^{n_j}_i(s)-M^{n_l}_i(s)\right)\,ds\right|
\leqslant 
\int_0^t\sum_{i=0}^{\infty}iq_i\left(M^{n_j}_i(s)+M^{n_l}_i(s)\right)\,ds\leqslant K_3 \varepsilon,
\end{multline*}
uniformly for $t\in[0,T/2]$.
 Combining the above statements, we conclude that there is $j_1\geqslant j_0$, such that, for $j\geqslant l\geqslant j_1$, and all $t\in[0,T/2]$,
$$
|I_2(t)|+|I_3(t)|\leqslant (K_2+K_3)\varepsilon\,.
$$
But, by the same reasoning, there is $K_4>0$ independent of $l, j, t$ such that, for $j\geqslant l \geqslant j_1$, and all $t\in[0,T]$,
$$
|I_1(t)|=\left|\int_0^t \left(x^{n_j}(s)-x^{n_l}(s)\right)\sum_{i=0}^{\infty}k_iM_i^{n_j}(s)\chi_{i,j}\,ds\right|\leqslant K_4\varepsilon\int_0^t |x^{n_j}(s)-x^{n_l}(s)|ds,
$$
so, by \eqref{xnkxnl}, if $K_5=K_2+K_3+K_4$, for $j\geqslant l\geqslant j_1$,
$$
\left|x^{n_j}(t)-x^{n_l}(t)\right|\leqslant K_5\left(\varepsilon+\int_0^t |x^{n_j}(s)-x^{n_l}(s)|ds\right)\,,
$$
for all $t\in[0,T/2],$ so that, by Gronwall's inequality, there is $C>0$, such that
$$
\sup_{t\in[0,T/2]}|x^{n_j}(t)-x^{n_l}(t)|\leqslant C\varepsilon.
$$
Due to the arbitrariness of $\varepsilon$ we conclude that $(x^{n_j})$ is a Cauchy sequence in $C([0,T/2])$, 
and therefore $x^{n_j}\to x$ uniformly for $t\in[0,T/2]$. This also proves the continuity of $x$ in $[0,T/2]$.

\medskip

We are now able to pass to the limit in the integrated version of the last equation of \eqref{siltrunc}, that is, in
\begin{equation}\label{xeq}
x^{n_j}(t)=x_0+\alpha t +\int_0^t \left(-x^{n_j}(s)\sum_{i=0}^{\infty} k_iM^{n_j}_i(s)\chi_{i,j}+\sum_{i=0}^{\infty} iq_iM^{n_j}_i(s)\right)ds\,,
\end{equation}
where $\chi_{i,j}$ is the indicator function of the set $\{0, 1, \ldots, n_j-1\}$.
Splitting again $\sum_{i\geqslant 0}=\sum_{i=0}^{N-1}+\sum_{i\geqslant N}$, using \eqref{intest}, and the bound on $x^{n_j},$ 
we observe that there is a constant $K_5>0$ such that, for $j\geqslant j_1$,
\begin{multline}
\left|\int_0^t \left(-x^{n_j}(s)\sum_{i=N}^{\infty} k_iM^{n_j}_i(s)\chi_{i,j}+\sum_{i=N}^{\infty} iq_iM^{n_j}_i(s)\right)ds\right| \leqslant\\
\leqslant \int_0^t \left(x^{n_j}(s)\sum_{i=N}^{\infty} k_iM^{n_j}_i(s)\chi_{i,j}+\sum_{i=N}^{\infty} iq_iM^{n_j}_i(s)\right)ds
\leqslant K_5\varepsilon\,, \label{intest3}
\end{multline}
for all $t\in[0,T/2]$. By a procedure already used above,  writing in this estimate, 
$\sum_{i\geqslant N}=\sum_{i=N}^{l-1}+\sum_{i\geqslant l}$ and making first $j\to\infty$ and then $l\to\infty$, 
using the uniform convergence of $(M^{n_j}_i)$ and $(x^{n_j})$, we conclude that
\begin{equation}
\label{intest2}
\left|\int_0^t \left(-x(s)\sum_{i=N}^{\infty} k_iM_i(s)+\sum_{i=N}^{\infty} iq_iM_i(s)\right)ds\right|\leqslant K_5\varepsilon\,,
\end{equation}
for all $t\in[0,T/2]$.  Thus, from \eqref{xeq} and \eqref{intest3}, for $j\geqslant j_1$,
$$
\left|x^{n_j}(t)-x_0-\alpha t+\int_0^t \left(x^{n_j}(s)\sum_{i=0}^{N-1} k_iM^{n_j}_i(s)-\sum_{i=0}^{N-1} iq_iM^{n_j}_i(s)\right)ds\right|\leqslant K_5\varepsilon\,,
$$
for all $t\in[0,T/2].$ Now letting $j\to \infty$ and using the same uniform convergence property we obtain, from the last inequality,
$$
\left|x(t)-x_0-\alpha t+\int_0^t \left(x(s)\sum_{i=0}^{N-1} k_iM_i(s)-\sum_{i=0}^{N-1} iq_iM_i(s)\right)ds\right|\leqslant K_5\varepsilon\,,
$$
and hence, by \eqref{intest2},
$$
\left|x(t)-x_0-\alpha t+\int_0^t \left(x(s)\sum_{i=0}^{\infty} k_iM_i(s)-\sum_{i=0}^{\infty} iq_iM_i(s)\right)ds\right|\leqslant 2K_5\varepsilon\,,
$$
for all $t\in[0,T/2]$. Since $\varepsilon$ was chosen arbitrarily small we conclude that \eqref{xt} in Definition \ref{defsol} holds for all $t\in[0,T/2]$. 

\medskip



We have showed that for any given \( T>0 \) and for each initial condition \( y_0\in X_+\) there exists a solution \( y\) defined in \( [0,T/2] \). 
Since the equations in system \eqref{M0t}--\eqref{xt} are autonomous this allows us to extend \( y \) to \([0,\infty)\).
\end{proof}
%
Setting $g_i=i$, for $i=1,2,\dots$, in the above theorem we obtain the following result:

\begin{coro}
	Let $k_i=O(i)$. Then, for each $y_0=(x_0,M_{01},M_{02},\dots)\in X_+$, there exists a solution $y$ of \eqref{silsys} on $[0,+\infty)$ with $y(0)=y_0$.
\end{coro}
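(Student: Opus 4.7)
The plan is to derive this corollary as an immediate application of Theorem~\ref{existence} by exhibiting a sequence $(g_i)$ for which all the hypotheses of that theorem are satisfied under the assumption $k_i=O(i)$.

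First, I would choose a concrete sequence. The natural attempt $g_i=i$ for all $i\in\mathbb{N}_0$ has the defect that $g_0=0$ forces, via the condition $(g_{i+1}-g_i)k_i=O(g_i)$ at $i=0$, the unwanted requirement $k_0=0$. To avoid this I would instead take $g_i=i+1$ for every $i\in\mathbb{N}_0$ (an equally valid alternative is $g_0=\tfrac12$ and $g_i=i$ for $i\geqs 1$). Either choice is quickly seen to be a modest perturbation of the ``natural'' $g_i=i$ used in the statement of the corollary.

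Next I would check the three hypotheses of Theorem~\ref{existence}. For $g_i=i+1$:
\begin{enumerate}
\item[(a)] Monotonicity: $g_{i+1}-g_i=1\geqs 1$ for all $i$, so $\delta=1$ works.
\item[(b)] Growth: $(g_{i+1}-g_i)k_i=k_i$, and the hypothesis $k_i=O(i)$ yields a constant $C>0$ and an index $i_0$ with $k_i\leqs Ci\leqs C g_i$ for $i\geqs i_0$; enlarging $C$ to also dominate the finitely many values $k_0/g_0,\ldots,k_{i_0-1}/g_{i_0-1}$ gives $(g_{i+1}-g_i)k_i\leqs Cg_i$ for all $i\in\mathbb{N}_0$.
\item[(c)] Initial moment: since $y_0\in X_+$,
\[
\sum_{i=0}^\infty g_iM_{0i}=\sum_{i=0}^\infty (i+1)M_{0i}\leqs \|y_0\|<\infty.
\]
\end{enumerate}
The sequences $(p_i)$ and $(q_i)$ are arbitrary nonnegative, which matches the theorem's remaining assumption.

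With the hypotheses verified, Theorem~\ref{existence} applied to this $(g_i)$ produces a solution $y$ of \eqref{silsys} on $[0,+\infty)$ with $y(0)=y_0$, which is exactly the conclusion of the corollary. There is essentially no obstacle here; the only subtle point worth recording is that the condition $g_{i+1}-g_i\geqs \delta>0$ in Theorem~\ref{existence} is incompatible with $g_0=0$ whenever $k_0\neq 0$, which is why a small positive shift of the sequence $g_i=i$ must be taken.
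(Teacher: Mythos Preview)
Your proof is correct and follows essentially the same approach as the paper, which simply says ``Setting $g_i=i$, for $i=1,2,\dots$'' and applies Theorem~\ref{existence}. Your attention to the value of $g_0$ is in fact more careful than the paper's one-line derivation, which leaves $g_0$ unspecified; your choice $g_i=i+1$ (or any shift making $g_0>0$) cleanly resolves the issue you identified with the condition $(g_{1}-g_0)k_0=O(g_0)$ when $k_0\neq 0$.
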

 \section{The moments' equation and some a priori results}
The next theorem states an a priori equality which is an integrated version for any solution of \eqref{silsys}, 
of a result stated in Lemma \ref{truncexist}
for the solutions of the truncated version of the silicosis system. 
\begin{theorem}\label{momentsMi}
	Let $(g_i)$ be a real sequence. Let $y:=(x,M_0,M_1,\dots)$ be a solution of \eqref{silsys} on some interval $[0,T)$, $0<T\leqslant\infty.$ Take any pair $(t_1, t_2)$, such that $0\leqslant t_1<t_2<T$. Suppose that,
\begin{equation}
\label{Prop1}
\int_{t_1}^{t_2} \sum_{i=0}^\infty |g_{i+1}-g_i|k_iM_i(s)ds<\infty\,,
	\end{equation}
	and, furthermore, one of the following two sets of conditions, {\rm(A)} or {\rm(B)}, holds:
	\begin{enumerate}
		\item[(A)] $g_i=O(i)$ \quad and \quad
		$\displaystyle\int_{t_1}^{t_2} \sum_{i=0}^\infty g_i(p_i+q_i)M_i(s)ds<\infty$; 
		\item[(B)] for $p=1,2$\,,\; $\displaystyle\sum_{i=0}^\infty g_iM_i(t_p)<\infty$, and, for sufficient large $i$,
		$g_{i+1}\geqslant g_i\geqslant 0$\,.
		\end{enumerate}
		Then, for $m\geqslant 1$, the following integrated version of \eqref{moments} holds for the solution $y$:
		\begin{multline}\label{momentscompl}
		\sum_{i=m}^\infty g_iM_i(t_2)-\sum_{i=m}^\infty g_iM_i(t_1)+\int_{t_1}^{t_2}\sum_{i=m}^\infty g_i(p_i+q_i)M_i(s)\,ds\\=
		\int_{t_1}^{t_2}g_mx(s)k_{m-1}M_{m-1}(s)\,ds
		+\int_{t_1}^{t_2}\sum_{i=m}^{\infty} (g_{i+1}-g_i)x(s)k_iM_i(s)\,ds\,.
		\end{multline}
\end{theorem}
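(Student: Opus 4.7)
The plan is to mirror the derivation of \eqref{moments} in Lemma~\ref{truncexist} at the level of the full infinite system, via a finite truncation plus passage to the limit. First, for each fixed $N\geqs m$, I would multiply the integral equation \eqref{Mit} for $M_i$ by $g_i$ and sum over $i=m,\ldots,N$. Because only finitely many indices are involved, the sum commutes with the integral. Applying Abel's summation by parts to the coagulation contributions, exactly as in Lemma~\ref{truncexist}, converts the combined term $\sum_{i=m}^N g_i(k_{i-1}xM_{i-1}-k_ixM_i)$ into $g_m x k_{m-1} M_{m-1}+\sum_{i=m}^{N-1}(g_{i+1}-g_i)\,x\, k_i\, M_i - g_N\, x\, k_N\, M_N$. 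The result is the truncated analogue of \eqref{momentscompl} accompanied by an extra boundary integral $-\int_{t_1}^{t_2} g_N\, x\, k_N\, M_N\,ds$.

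Next, I would let $N\to\infty$ in this finite identity. The partial sums $\sum_{i=m}^N g_iM_i(t_p)$, $p=1,2$, converge to $\sum_{i=m}^\infty g_iM_i(t_p)$: under (A), by the $O(i)$ bound on $g_i$ combined with $y(t_p)\in X_+$; under (B), directly by hypothesis. The integral $\int_{t_1}^{t_2}\sum_{i=m}^N g_i(p_i+q_i)M_i\,ds$ converges by dominated convergence (using the explicit assumption in (A)) or monotone convergence (using the eventual non-negativity of the summands in (B)). The integral $\int_{t_1}^{t_2}\sum_{i=m}^{N-1}(g_{i+1}-g_i)\,x\,k_i\,M_i\,ds$ converges to its infinite-sum counterpart by dominated convergence, with \eqref{Prop1} and the boundedness of $x$ on $[t_1,t_2]$ (from Definition~\ref{defsol}(ii)) supplying an integrable dominant.

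The hard part is to show that the boundary integral $\int_{t_1}^{t_2} g_N\, x\, k_N\, M_N\,ds$ vanishes as $N\to\infty$. My approach would be to use the integral equation \eqref{Mit} for $M_N$ itself to rewrite
\[
\int_{t_1}^{t_2} k_N\, x\, M_N\,ds = M_N(t_1)-M_N(t_2)+\int_{t_1}^{t_2} k_{N-1}\, x\, M_{N-1}\,ds-\int_{t_1}^{t_2}(p_N+q_N)\,M_N\,ds,
\]
multiply by $g_N$, and control each contribution. Under (A), the bound $|g_N|\leqs CN$ combined with the decay $(N+1)\,M_N(t_p)\to 0$ (from $y(t_p)\in X_+$) disposes of the terminal value terms; the assumption $\int\sum g_i(p_i+q_i)M_i\,ds<\infty$ handles the $(p_N+q_N)$ term; and the splitting $g_N=g_{N-1}+(g_N-g_{N-1})$ combined with \eqref{Prop1} reduces the $k_{N-1}$ term to a recursion that, matched against the existence of the common limit forced by the other convergences, must vanish. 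Under (B), an analogous decomposition applies, this time exploiting the monotonicity of $(g_i)$ and the tail estimate $\sum_{i\geqs N}g_iM_i(t_p)\to 0$ that follows from $\sum g_i M_i(t_p)<\infty$. Once this vanishing is secured, passing to the limit in the finite Abel identity from the first paragraph yields precisely \eqref{momentscompl}.
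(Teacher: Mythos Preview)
Your overall strategy---truncate at $N$, Abel-summate, pass to the limit---is exactly the paper's, and your treatment of the partial sums $\sum_{i=m}^N g_iM_i(t_p)$ and of the integral $\int\sum(g_{i+1}-g_i)xk_iM_i$ is correct. The gap is in the boundary term.

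From the single equation \eqref{Mit} for $M_N$, after the splitting $g_N=g_{N-1}+(g_N-g_{N-1})$ and control of the side terms, you obtain at best $a_N-a_{N-1}\to 0$, where $a_N:=\int_{t_1}^{t_2}g_N\,x\,k_N M_N\,ds$. You then claim this, ``matched against the existence of the common limit forced by the other convergences, must vanish''. But convergence of $(a_N)$ together with $a_N-a_{N-1}\to 0$ tells you nothing about the value of the limit: every convergent sequence satisfies the latter. Under (A) this is repairable---for instance, Definition~\ref{defsol}(iii) gives $\sum_N\int k_NM_N<\infty$, hence $\liminf_N N\int k_NM_N=0$, and since $|g_N|\leqs CN$ a subsequence $a_{N_j}\to 0$ pins the limit down. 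Under (B), though, $(g_N)$ may grow arbitrarily fast, so no such subsequence is available, and the term $g_N\int(p_N+q_N)M_N$ is not controlled by the hypotheses of (B) either.

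The paper avoids the recursion altogether: it first takes $g_i\equiv 1$ in the truncated identity and passes to the limit to obtain the \emph{tail} equation \eqref{difmt2t1}, then sets $m=n+1$ there and multiplies by $g_{n+1}$. The point is that the resulting expressions $g_{n+1}\sum_{i\geqs n+1}M_i(t_p)$ and $g_{n+1}\int\sum_{i\geqs n+1}(p_i+q_i)M_i$ are bounded by the tails $\sum_{i\geqs n+1}g_iM_i(t_p)$ and $\int\sum_{i\geqs n+1}g_i(p_i+q_i)M_i$ (via $g_i=O(i)$ and Definition~\ref{defsol}(iii) in case (A); via eventual monotonicity of $(g_i)$ in case (B), once finiteness of $\int\sum g_i(p_i+q_i)M_i$ has been read off from the main identity by monotone convergence). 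These tails vanish, giving $\int g_{n+1}xk_nM_n\to 0$ directly. Replacing your single-index recursion by this tail-sum step closes the argument.
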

\begin{proof}
	Take any $n>m\geqslant 1$. From \eqref{Mit} we easily obtain
			\begin{multline}\label{momentsmn}
			\sum_{i=m}^n g_iM_i(t_2)-\sum_{i=m}^n g_iM_i(t_1)+\int_{t_1}^{t_2}\sum_{i=m}^n g_i(p_i+q_i)M_i(s)\,ds =\\
			=
			\int_{t_1}^{t_2}g_mx(s)k_{m-1}M_{m-1}(s)\,ds
			-\int_{t_1}^{t_2}g_{n+1}x(s)k_{n}M_{n}(s)\,ds\\
			+\int_{t_1}^{t_2}\sum_{i=m}^{n} (g_{i+1}-g_i)x(s)k_iM_i(s)\,ds\,.
			\end{multline}
			We now pass to the limit $n\to\infty$ in both sides. By the boundedness of $x$ 
and hypothesis \eqref{Prop1}, we readily conclude that, as $n\to\infty$,
			\begin{equation}
			\label{difglim}
			\int_{t_1}^{t_2}\sum_{i=m}^{n} (g_{i+1}-g_i)x(s)k_iM_i(s)\,ds\;
			\to\; \int_{t_1}^{t_2}\sum_{i=m}^{\infty} (g_{i+1}-g_i)x(s)k_iM_i(s)\,ds\,.
			\end{equation}

			Next we claim that the second integral in the right-hand side of \eqref{momentsmn} 
			vanishes in this limit: from (ii) and (iii) of Definition \ref{defsol}, 
			we readily obtain
			\begin{equation}\label{intlim1}
			\int_{t_1}^{t_2}x(s)k_nM_n(s)\,ds \to 0, \;\text{ as }\;n\to\infty\,;
			\end{equation}
			on the other hand, by setting $g_i=1$ for all $i=0,1,2,\dots$ in \eqref{momentsmn}, we obtain,
		\begin{multline*}
		\sum_{i=m}^n M_i(t_2)-\sum_{i=m}^n M_i(t_1)+\int_{t_1}^{t_2}\sum_{i=m}^n (p_i+q_i)M_i(s)\,ds =\\
		= \int_{t_1}^{t_2}x(s)k_{m-1}M_{m-1}(s)\,ds
		- \int_{t_1}^{t_2}x(s)k_{n}M_{n}(s)\,ds\,,
		\end{multline*}
		so that, from \eqref{intlim1} and again the properties (ii) and (iii) of Definition~\ref{defsol}, by letting $n\to\infty$, 
		we obtain,
		\begin{align}\nonumber
		\sum_{i=m}^\infty M_i(t_2)-\sum_{i=m}^\infty M_i(t_1)+\int_{t_1}^{t_2}\sum_{i=m}^\infty &(p_i+q_i)M_i(s)\,ds = \\
		&=
		\int_{t_1}^{t_2}x(s)k_{m-1}M_{m-1}(s)\,ds\,.	\label{difmt2t1}
		\end{align}
		Let us now consider hypothesis (A). This implies that, for $p=1,2,$
		$$
		|g_{n+1}|\sum_{i=n+1}^\infty M_i(t_p)\leqslant C(n+1)\sum_{i=n+1}^\infty M_i(t_p)\leqslant C\sum_{i=n+1}^\infty iM_i(t_p)\,,
		$$
		for some constant $C>0$, and thus
		\begin{equation}
		\label{sumgnlim}
			|g_{n+1}|\sum_{i=n+1}^\infty M_i(t_p)\to 0\,,\;\text{ as }\; n\to \infty\,.
		\end{equation} 
		Replacing $m$ by $n+1$ in \eqref{difmt2t1}, multiplying both members by $|g_{n+1}|$,  
		letting $n\to\infty$, and using (iii) of Definition~\ref{defsol} together with \eqref{sumgnlim}, we conclude that,
		\begin{equation}
		\label{secondintlim}
		\int_{t_1}^{t_2}g_{n+1}x(s)k_{n}M_{n}(s)\,ds\to 0\,,\;\text{ as }\; n\to\infty\,,
		\end{equation}
		thus proving our claim under the first condition in (A). Using the second condition in (A) 
		together with Definition~\ref{defsol}, \eqref{difglim}, \eqref{secondintlim}, and the 
		bounded convergence theorem, we can pass to the limit $n\to\infty$ in \eqref{momentsmn} and prove \eqref{momentscompl}.
		
		\medskip
		
		Now we prove the claim under the hypothesis (B). This implies that, for some constant $C>0$,
		$$
		|g_{n+1}|\sum_{i=n+1}^\infty M_i(t_p)\leqslant C\sum_{i=n+1}^\infty g_iM_i(t_p)\,,
		$$
		and thus, under these conditions, also \eqref{sumgnlim} and \eqref{secondintlim} hold true. 
		Therefore, using hypothesis (B), \eqref{difglim}, and \eqref{secondintlim} we conclude that
		all terms in \eqref{momentsmn} but the integral in the 
		left-hand side converge as $n\to\infty.$ But then, using the monotone convergence theorem we conclude 
		that in \eqref{momentscompl} we also have,
		$$
		\int_{t_1}^{t_2}\sum_{i=m}^n g_i(p_i+q_i)M_i(s)\,ds\;\to\;
			\int_{t_1}^{t_2}\sum_{i=m}^\infty g_i(p_i+q_i)M_i(s)\,ds\,,
		$$
		as $n\to\infty$, thus completing the proof.
\end{proof}

Given some $y=(x,M_0,M_1,\dots)\in X_+$, recall the definitions of $\mathcal{X}(y)$, $\mathcal{M}(y)$ and $\mathcal{U}(y)$.

\begin{coro}\label{corol5.2}
	Let $y:=(x,M_0,M_1,\dots)$ be a solution of \eqref{silsys} in $[0,T)$, $0< T\leqslant \infty$. Then, for all $t\in[0,T),$ the 
following integrated version of \eqref{ynorm1} holds for that solution:
	\begin{multline}\label{Uyt0}
	\mathcal{U}(y(t))-\mathcal{U}(y(0)) =\\
	=(r+\alpha)t
	-\int_0^t\sum_{i=0}^\infty (p_i+q_i)M_i(s)\,ds
	-\int_0^t\sum_{i=1}^\infty ip_iM_i(s)\,ds\,.
	\end{multline}
	Also, for every $m\geqslant 1$,
		\begin{multline}\label{momentscomplg1}
		\sum_{i=m}^\infty M_i(t)-\sum_{i=m}^\infty M_i(0)+\int_{0}^{t}\sum_{i=m}^\infty (p_i+q_i)M_i(s)\,ds =\\
		=
		\int_{0}^{t}x(s)k_{m-1}M_{m-1}(s)\,ds\,,
		\end{multline}
		and
			\begin{multline}\label{momentscomplgi}
			\sum_{i=m}^\infty iM_i(t)-\sum_{i=m}^\infty iM_i(0)+\int_{0}^{t}\sum_{i=m}^\infty i(p_i+q_i)M_i(s)\,ds =\\
			=
			\int_{0}^{t}mx(s)k_{m-1}M_{m-1}(s)\,ds
			+\int_{0}^{t}\sum_{i=m}^{\infty} x(s)k_iM_i(s)\,ds\,.
			\end{multline}
\end{coro}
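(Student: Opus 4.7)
The plan is to derive all three identities from Theorem \ref{momentsMi} by choosing two suitable sequences $(g_i)$, and then to obtain the mass-balance identity \eqref{Uyt0} by combining these with the integral equations for $M_0$ and $x$ supplied by Definition \ref{defsol}.

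First I would prove \eqref{momentscomplg1}. Take $g_i \equiv 1$ in Theorem \ref{momentsMi}. Then $g_{i+1}-g_i=0$, so hypothesis \eqref{Prop1} is trivially satisfied. For condition (A), $g_i=1=O(i)$ as $i\to\infty$, and $\int_0^t\sum_{i\geqs 0}(p_i+q_i)M_i(s)\,ds$ is finite: for $i\geqs 1$ it is controlled by $\int_0^t\sum_{i\geqs 1} i(p_i+q_i)M_i(s)\,ds$, which is finite by Definition \ref{defsol}(iii), while the $i=0$ term is integrable since $M_0$ is continuous on $[0,t]$. With these choices \eqref{momentscompl} reduces exactly to \eqref{momentscomplg1} because the telescoping sum involving $(g_{i+1}-g_i)$ vanishes.

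Next I would prove \eqref{momentscomplgi} by taking $g_i=i$ in Theorem \ref{momentsMi}. Now $|g_{i+1}-g_i|=1$, so \eqref{Prop1} becomes $\int_{t_1}^{t_2}\sum_{i\geqs 0} k_iM_i(s)\,ds<\infty$, which holds by Definition \ref{defsol}(iii). Condition (A) is immediate: $g_i=i=O(i)$, and $\int_0^t\sum_{i\geqs 0} i(p_i+q_i)M_i(s)\,ds<\infty$ again by Definition \ref{defsol}(iii). Substituting into \eqref{momentscompl} yields \eqref{momentscomplgi} since $g_m=m$ and $g_{i+1}-g_i=1$.

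Finally, I would obtain \eqref{Uyt0} by adding the two identities above (with $m=1$) to the integrated equations \eqref{M0t} and \eqref{xt}. Specifically, \eqref{momentscomplg1} with $m=1$ combined with \eqref{M0t} gives, after the $\int_0^t x k_0 M_0$ terms cancel,
\[
\mathcal{M}(y(t))-\mathcal{M}(y(0)) = rt - \int_0^t\sum_{i=0}^{\infty}(p_i+q_i)M_i(s)\,ds\,.
\]
Similarly, \eqref{momentscomplgi} with $m=1$, whose right-hand side collapses to $\int_0^t x(s)\sum_{i=0}^{\infty}k_iM_i(s)\,ds$, combined with \eqref{xt}, produces the cancellation of both the $x\sum k_iM_i$ term and the $\sum iq_iM_i$ contribution, leaving
\[
\mathcal{X}(y(t))-\mathcal{X}(y(0)) = \alpha t - \int_0^t\sum_{i=1}^{\infty} ip_iM_i(s)\,ds\,.
\]
Summing these two relations and using $\mathcal{U}=\mathcal{X}+\mathcal{M}$ yields \eqref{Uyt0}. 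I do not anticipate any genuine obstacle here: the only points requiring care are the verification of the summability conditions of Theorem \ref{momentsMi} from Definition \ref{defsol}(iii), and the clean bookkeeping of the cancellations when combining the $\mathcal{X}$ and $\mathcal{M}$ identities.
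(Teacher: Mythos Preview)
Your proposal is correct and follows essentially the same route as the paper: apply Theorem~\ref{momentsMi} with $g_i\equiv 1$ and $g_i=i$ to obtain \eqref{momentscomplg1} and \eqref{momentscomplgi}, then combine the $m=1$ cases with \eqref{M0t} and \eqref{xt} to get the $\mathcal{M}$- and $\mathcal{X}$-balances and add them for \eqref{Uyt0}. The only cosmetic difference is that for $g_i=i$ the paper invokes hypothesis~(B) of Theorem~\ref{momentsMi} (using $\sum iM_i(t_p)<\infty$ from Definition~\ref{defsol}(ii)), whereas you verify hypothesis~(A) directly from Definition~\ref{defsol}(iii); both are valid here.
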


\begin{proof}
Equations \eqref{momentscomplg1} and \eqref{momentscomplgi} are obtained by setting $g_i=1$ and $g_i=i$, resp., 
in the previous theorem. The first choice trivially satisfies all conditions of the theorem. The second, if we recall Definition~\ref{defsol}, 
satisfies all statements corresponding to the hypothesis (B).
Adding the equation for $x(t)$, \eqref{xt}, to the equation \eqref{momentscomplgi} with $m=1$, we obtain,
\begin{equation}\label{Xyt}
\mathcal{X}(y(t))-\mathcal{X}(y(0))=\alpha t-\int_{0}^{t}\sum_{i=1}^\infty ip_iM_i(s)\,ds.
\end{equation}
On the other hand, adding the equation for $M_0(t)$, \eqref{M0t}, to equation \eqref{momentscomplg1} with $m=1$, we obtain,
\begin{equation}\label{Myt}
\mathcal{M}(y(t))-\mathcal{M}(y(0))=rt-\int_{0}^{t}\sum_{i=1}^\infty i(p_i+q_i)M_i(s)\,ds.
\end{equation}
Finally, adding together \eqref{Xyt} and \eqref{Myt} we obtain \eqref{Uyt0} thus completing the proof.
\end{proof}

The previous results allow us to directly deduce the following conclusion on the solution regularity:
\begin{coro}\label{unifconv}
	Let $y$ be a solution of \eqref{silsys} on an interval $[0,T)$, $0<T\leqslant \infty.$ Then, $y:[0,T)\to X$ is 
continuous and, moreover, the series $\sum_{i=1}^\infty iM_i(t)$ is uniformly convergent on compact subintervals of $[0,T).$
\end{coro}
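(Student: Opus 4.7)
The plan is to apply Dini's monotone convergence theorem to the partial sums of $\|y(t)\|$, using Corollary \ref{corol5.2} as the key input. The whole proof reduces to the observation that $\|y(t)\|=\mathcal{U}(y(t))$ on $X_+$ is a continuous function of $t$.

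First, I would verify this continuity. Identity \eqref{Uyt0} writes $\mathcal{U}(y(t))$ as $\mathcal{U}(y(0))+(r+\alpha)t$ minus two integrals whose integrands are non-negative (since $M_i, p_i, q_i\geqslant 0$) and which are finite for every $t\in[0,T)$ by Corollary \ref{corol5.2}. Hence both integrals define absolutely continuous functions of $t$, and so $t\mapsto\mathcal{U}(y(t))=\|y(t)\|$ is continuous on $[0,T)$.

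Next, for $N\in\mathbb{N}$ I would consider the partial sums
$$
S_N(t):=x(t)+\sum_{i=0}^N (i+1)M_i(t).
$$
Each $S_N$ is continuous by condition (ii) of Definition \ref{defsol}; the sequence $(S_N(t))_N$ is monotone non-decreasing in $N$ because $M_i\geqslant 0$; and it converges pointwise to $\mathcal{U}(y(t))$. Fixing any compact subinterval $[0,T']\subset[0,T)$, Dini's theorem then gives uniform convergence $S_N\to \mathcal{U}(y(\cdot))$ on $[0,T']$. Equivalently,
$$
R_N(t):=\sum_{i=N+1}^\infty (i+1)M_i(t)\;\longrightarrow\; 0\quad\text{uniformly for }t\in[0,T'].
$$

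Both conclusions of the corollary follow immediately from this uniform tail estimate. For continuity of $y:[0,T)\to X$ at $t_0\in[0,T)$, the triangle inequality gives
$$
\|y(t)-y(t_0)\|\leqslant |x(t)-x(t_0)|+\sum_{i=0}^N(i+1)|M_i(t)-M_i(t_0)|+R_N(t)+R_N(t_0);
$$
choosing $N$ so that the tails are uniformly small on a compact neighbourhood of $t_0$, and then letting $t\to t_0$, the remaining finite sum vanishes by continuity of each coordinate. For the uniform convergence of $\sum_{i=1}^\infty iM_i(t)$ on $[0,T']$, it suffices to observe that $\sum_{i=N+1}^\infty iM_i(t)\leqslant R_N(t)$. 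The only delicate point is correctly verifying the continuity of $\mathcal{U}(y(\cdot))$ needed to apply Dini, which is exactly what \eqref{Uyt0} supplies; the rest of the argument is routine.
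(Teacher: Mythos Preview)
Your proof is correct and follows essentially the same route as the paper: define the partial sums $\phi_N(t)=x(t)+\sum_{i=0}^N(i+1)M_i(t)$, invoke \eqref{Uyt0} from Corollary~\ref{corol5.2}, apply Dini's theorem on each $[0,T']$, and deduce both the uniform convergence of $\sum iM_i$ and the continuity of $y$ in $X$ via the resulting uniform tail estimate. If anything, you are slightly more explicit than the paper in isolating the continuity of $\mathcal{U}(y(\cdot))$ as the hypothesis of Dini that \eqref{Uyt0} provides; the paper records \eqref{Uyt0} and then passes directly to Dini without spelling this out.
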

\begin{proof}
	By Definition~\ref{defsol}, $x$ and $M_i$, $i=0,1,2,\dots$, are continuous real functions on $[0,T).$ 
	For each $n\in\mathbb{N},$ let $\displaystyle\phi_n(t):=x(t)+\sum_{i=0}^n(i+1) M_i(t)$, for all $t\in[0,T).$ 
	Each $\phi_n$ is a continuous function on $[0,T)$. Take now any $T'\in(0,T)$.
	By \eqref{Uyt0} we know that, for all $n\in\mathbb{N}$ and $t\in[0,T']$, 
	$$
		\phi_n(t)\leqslant\mathcal{U}(y(t))\leqslant\mathcal{U}(y(0))+(r+\alpha)T'\,.
	$$
	But, for each $t\in[0,T)$, the real sequence $(\phi_n(t))$ is increasing, and thus, 
	by Dini's theorem, the series $\sum_{i=0}^\infty i M_i(t)$ is uniformly convergent on $[0,T']$ 
	and the second statement of the corollary is proved.
	
	\medskip
	
	To prove the continuity of $[0,T)\ni t\mapsto y(t)\in X$, we fix any $t_0\in[0,T)$. Let $T'\in(t_0,T)$ 
	so that the series $\sum_{i=1}^\infty iM_i(t)$, and thus also the series $\sum_{i=1}^\infty (i+1)M_i(t)$ 
	is uniformly convergent on $[0,T'].$ Let $\varepsilon>0$ be given. Then, there exists $N=N(T')>1$ such that, 
	$\displaystyle\sup_{t\in[0,T']}\sum_{i=N+1}^\infty (i+1)M_i(t)<\varepsilon$. For any $t\in[0,T'],$
	\begin{align*}
	\|y(t)-y(t_0)\|&=|x(t)-x(t_0)|+\left(\sum_{i=0}^N+\sum_{i=N+1}^\infty\right)(i+1)|M_i(t)-M_i(t_0)|\\
	&\leqslant |x(t)-x(t_0)|+\sum_{i=0}^N(i+1)|M_i(t)-M_i(t_0)|+2\varepsilon\,.
	\end{align*}
	But then, by continuity of the real valued functions $x$ and $M_i,$ $i=0,1,2,\dots$, 
	$$
	0\leqslant\liminf_{t\to t_0}\|y(t)-y(t_0)\|\leqslant\limsup_{t\to t_0}\|y(t)-y(t_0)\|\leqslant 2\varepsilon\,.
	$$
	From this and the arbitrariness of $\varepsilon$ we conclude that
	$
	\lim_{t\to t_0}\|y(t)-y(t_0)\|=0\,.
	$
\end{proof}
Only from the definition of solution and without further assumptions we know, by conditions (iii)
in Definition~\ref{defsol}, that 
$\sum_{i=1}^\infty iq_iM_i(t)<\infty$, for a.e. $t$  in $(0,T)$. This means, in particular, that 
if $y(0)\in X_+$ but the previous series is divergent in $t=0$ then, generically, for $t>0$, it 
is convergent and thus the sequences $(y_n(t))_{n\in\mathbb{N}}$ will decay faster than $y(0)$
as $n\to\infty$. However, the next two propositions show that with additional hypothesis 
we can be more specific about this behaviour.
\begin{prop}\label{abscont1}
	If $(k_i)_{i\in\mathbb{N}_0}$, $(p_i)_{i\in\mathbb{N}_0}$ and $(q_i)_{i\in\mathbb{N}_0}$ 
	are nonnegative and furthermore, $k_i=O(i)$, and   $(q_i)_{i\in\mathbb{N}_0}$ is bounded, 
	then, for each solution $y=(x,M_0,M_1,\dots)$ on $[0,T)$, the function 
	$Q(\cdot):=\sum_{i=1}^\infty iq_iM_i(\cdot),$ is absolutely continuous on each 
	compact subinterval of $[0,T)$. Furthermore, if
	$(p_i)_{i\in\mathbb{N}_0}$ is bounded, then the function 
	$P(\cdot):=\sum_{i=1}^\infty ip_iM_i(\cdot)$ is also absolutely continuous on each compact 
	subinterval of $[0,T)$.
\end{prop}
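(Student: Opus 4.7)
The plan is to represent $Q(t_2)-Q(t_1)$ as the integral of an $L^1$ function on $[t_1,t_2]$, which is the standard characterisation of absolute continuity. The natural instrument is the moments' equation of Theorem \ref{momentsMi} applied with the test sequence $g_i:=iq_i$.

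First I would check that $Q$ is well-defined and continuous on each compact subinterval $[0,T']\subset[0,T)$: writing $Q_{\max}:=\sup_i q_i$, one has $Q(t)\leqslant Q_{\max}\sum_{i\geqslant 1}iM_i(t)$, and by Corollary \ref{unifconv} the series $\sum iM_i(\cdot)$ converges uniformly on $[0,T']$, so that $Q$ is a uniform limit of continuous functions.

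The substantive step is to apply Theorem \ref{momentsMi} with $g_i=iq_i$ and $m=1$ under scheme (A). The condition $g_i=O(i)$ is immediate from boundedness of $(q_i)$, and
\[
\int_{t_1}^{t_2}\sum_i g_i(p_i+q_i)M_i\,ds \;\leqslant\; Q_{\max}\int_{t_1}^{t_2}\sum_i i(p_i+q_i)M_i\,ds \;<\;\infty
\]
by Definition \ref{defsol}(iii). The delicate requirement is \eqref{Prop1}: since $|(i+1)q_{i+1}-iq_i|\leqslant (2i+1)Q_{\max}$ and $k_i=O(i)$, the summand $|g_{i+1}-g_i|k_iM_i$ is only $O(i^2M_i)$, so its integrability is not immediate from Definition \ref{defsol}(iii). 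I expect this to be the main obstacle: it should be handled by combining $k_i=O(i)$ with the moment identity \eqref{momentscomplgi}, the uniform bound on $\sum iM_i(t)$ supplied by Corollary \ref{unifconv}, and the uniform convergence of the tail $\sum_{i\geqslant N}iM_i(t)\to 0$, to control the sign-changing sum via its positive/negative parts together with the factor $xk_iM_i$ already known to be integrable.

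Once \eqref{Prop1} is verified, Theorem \ref{momentsMi} with $m=1$ yields
\[
Q(t_2)-Q(t_1)=\int_{t_1}^{t_2}\!\Bigl[q_1xk_0M_0+\!\sum_{i\geqslant 1}\!\bigl((i+1)q_{i+1}-iq_i\bigr)xk_iM_i-\!\sum_{i\geqslant 1}iq_i(p_i+q_i)M_i\Bigr]ds,
\]
with integrand in $L^1([t_1,t_2])$, which is exactly the defining form of absolute continuity on $[t_1,t_2]$. The statement for $P$ is then obtained by the identical argument applied to $g_i:=ip_i$, using the additional hypothesis that $(p_i)$ is bounded.
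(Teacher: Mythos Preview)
Your approach—applying Theorem~\ref{momentsMi} with $g_i=iq_i$ under hypothesis set (A)—is exactly what the paper does, and the preliminary continuity argument via Corollary~\ref{unifconv} is a sensible addition. You are also right to flag condition \eqref{Prop1} as the crux: with only $(q_i)$ bounded and $k_i=O(i)$, the crude bound $|(i+1)q_{i+1}-iq_i|\,k_i = O(i^2)$ is all one gets, and nothing in Definition~\ref{defsol} controls $\int\sum i^2 M_i$.

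The gap is that your proposed resolution does not close this. The ingredients you list—identity \eqref{momentscomplgi}, the uniform bound on $\sum iM_i$, uniform tail decay, and integrability of $\sum k_iM_i$—all live at the level of first moments, and none of them yields the second-moment control that the estimate $|g_{i+1}-g_i|k_i=O(i^2)$ demands. Splitting into positive and negative parts does not help either, since \eqref{Prop1} asks precisely for the sum of absolute values; for instance with $q_i=1+(-1)^i$ and $k_i=i$ one has $|g_{i+1}-g_i|\,k_i\sim i^2$ genuinely, and \eqref{Prop1} need not hold for an arbitrary solution in $X_+$. It is worth noting that the paper's own proof simply \emph{asserts} that $g_i=iq_i$ satisfies \eqref{Prop1}, without argument, so the difficulty is not resolved there either. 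You have been more careful than the paper in identifying the obstacle; to make the proof complete one would need either an additional regularity hypothesis on $(q_i)$ (for example $|q_{i+1}-q_i|=O(1/i)$, which forces $|g_{i+1}-g_i|=O(1)$ and then \eqref{Prop1} follows from $\int\sum k_iM_i<\infty$), or a route to absolute continuity that avoids \eqref{Prop1} altogether.
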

\begin{proof}
Let the first set of conditions be fulfilled. Then, the sequence $g_i=iq_i$ satisfies \eqref{Prop1} 
and hypothesis (A) of Theorem \ref{momentsMi}, so that we consider the corresponding version of 
\eqref{momentscompl} with $m=1.$ Take any $T'\in (0,T)$. Then, that expression shows that, for 
each $t\in[0,T')$, $	Q(t)-Q(0)$ can be written as a sum of integrals of Lebesgue integrable 
functions on $[0,T')$. Therefore, it is an absolutely continuous function of $t$ on $[0,T')$. 
The same applies to $P(\cdot)$ if we consider the more strict hypothesis on the $p_i$ coefficients.
\end{proof}

\medskip

The previous Proposition imposes too stringent restrictions to the sequences $(p_i)$ and $(q_i)$ 
that might not be satisfied in some applications. Having this in mind we can still obtain the 
following result without those restrictive hypothesis at the expense of not guaranteeing 
the absolute continuity of $Q(\cdot)$ down to $t=0.$ 

\begin{prop}\label{abscont2}
Let $(k_i)_{i\in\mathbb{N}_0}$ and $(q_i)_{i\in\mathbb{N}_0}$ satisfy,
$(i+1)q_{i+1}\geqslant iq_i,$ for sufficiently large $i$, and $((i+1)q_{i+1}-iq_i)k_i=O(iq_i)$. Consider a solution $y$ on
some interval $[0,T)$. Then, the function $Q(\cdot)$ is absolutely continuous on each compact subinterval of $(0,T)$.
\end{prop}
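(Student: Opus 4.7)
The plan is to apply Theorem \ref{momentsMi} with $g_i = iq_i$ and $m = 1$ under hypothesis (B), which will express $Q(t_2)-Q(t_1)$ as a single integral over $[t_1,t_2]$ of an $L^1$ function; absolute continuity of $Q$ on every compact subinterval of $(0,T)$ then follows at once. The growth assumption $((i+1)q_{i+1}-iq_i)k_i = O(iq_i) = O(g_i)$ together with the integrability $\int_0^T Q(s)\,ds < \infty$ coming from Definition \ref{defsol}(iii) secures condition \eqref{Prop1}, and the eventual monotonicity $(i+1)q_{i+1} \geqslant iq_i$ supplies the monotonicity part of hypothesis (B). The genuinely missing ingredient, and the main obstacle, is the endpoint finiteness $Q(t_p) < \infty$ at a generic pair of times $t_1 < t_2$ in $(0,T)$, which must be obtained from the sole fact that $Q \in L^1(0,T)$.

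The first step is therefore to establish $Q(t) < \infty$ for \emph{every} $t \in (0,T)$. Since $Q \in L^1(0,T)$, the set $E := \{\, t \in (0,T) : Q(t) < \infty\,\}$ has full Lebesgue measure and in particular contains points arbitrarily close to $0$. Fix $t_0 \in E$ and $t > t_0$. For every finite $n$ the truncated identity \eqref{momentsmn} is available (it is just an integration of \eqref{Mit} and requires no convergence hypothesis). Taking $g_i = iq_i$, $m = 1$, discarding the nonnegative left-hand-side term $\int_{t_0}^t \sum_{i=1}^n g_i(p_i+q_i)M_i\,ds$ and the nonpositive right-hand-side boundary term $-\int_{t_0}^t g_{n+1}\,x\,k_nM_n\,ds$, and choosing $i_0$ large enough that $((i+1)q_{i+1}-iq_i)k_i \leqslant K\, iq_i$ for all $i \geqslant i_0$, one arrives at
\[
Q_n(t) \;\leqslant\; Q(t_0) + C_0 + K\, C_x \int_{t_0}^{t} Q_n(s)\, ds,
\]
where $Q_n(t):=\sum_{i=1}^n iq_i M_i(t)$, $C_x$ is the supremum of $x$ on a fixed $[0,T']\subset[0,T)$ (finite by Definition \ref{defsol}(ii)), and $C_0$ absorbs the finitely many contributions from $i<i_0$. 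Gronwall's lemma gives an $n$-uniform bound on $Q_n(t)$, and the monotone convergence $Q_n \nearrow Q$ yields $Q(t) < \infty$ on $[t_0, T')$. Letting $t_0 \downarrow 0$ inside $E$, one concludes $E = (0,T)$.

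With $Q$ finite throughout $(0,T)$, hypothesis (B) of Theorem \ref{momentsMi} now applies to any pair $t_1 < t_2$ in $(0,T)$. Identity \eqref{momentscompl} with $m = 1$ becomes
\[
Q(t_2) - Q(t_1) = \int_{t_1}^{t_2}\!\Bigl[\,q_1 x k_0 M_0 + \sum_{i=1}^{\infty}\bigl((i+1)q_{i+1}-iq_i\bigr)x k_i M_i - \sum_{i=1}^{\infty} iq_i(p_i+q_i)M_i\,\Bigr]\,ds,
\]
and each of the three integrands is in $L^1([t_1,t_2])$ by Theorem \ref{momentsMi} itself (the first trivially, the second through \eqref{Prop1}, the third as a by-product of hypothesis (B), whose validity yields the finiteness of every term in \eqref{momentscompl}). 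Consequently $Q$ coincides on $[t_1,t_2]$ with the constant $Q(t_1)$ plus an indefinite Lebesgue integral of an $L^1$ function, and is therefore absolutely continuous on every compact subinterval $[a,b] \subset (0,T)$.
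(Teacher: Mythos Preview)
Your proof is correct and uses the same essential ingredients as the paper: the moments identity with $g_i=iq_i$, hypothesis (B) of Theorem~\ref{momentsMi}, the growth bound $((i+1)q_{i+1}-iq_i)k_i=O(iq_i)$, and a Gronwall argument. The one structural difference is the order of operations in establishing that $Q(t)<\infty$ for \emph{every} $t\in(0,T)$. The paper first applies Theorem~\ref{momentsMi}(B) on the full-measure set $\{Q<\infty\}$, runs Gronwall directly on $Q$ to obtain an a.e.\ $L^\infty$ bound, and then rather tersely asserts that feeding this back into \eqref{momentscompl} yields continuity on all of $(0,T')$. You instead work with the finite-$n$ identity \eqref{momentsmn} (which requires no endpoint finiteness), apply Gronwall to the partial sums $Q_n$, and take $n\to\infty$ by monotone convergence; only afterwards do you invoke Theorem~\ref{momentsMi}(B), now available for every pair $t_1<t_2$ in $(0,T)$. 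Your ordering is arguably cleaner, since it makes the passage from ``$Q<\infty$ a.e.'' to ``$Q<\infty$ everywhere'' completely explicit.
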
 
\begin{proof}
	Take any pair $(t_1,t_2)$ such that, $0<t_1<t_2<T'$, for some fixed $T'\in(0,T)$, and $\sum_{i=1}^\infty iq_iM_i(t_p)$
 is finite for $p=1,2.$ We claim that $g_i=iq_i$ satisfies \eqref{Prop1} and hypothesis (B) of Theorem \ref{momentsMi}. Since
 $((i+1)q_{i+1}-iq_i)k_i=O(iq_i)$ and by (ii) and (iii) of Definition~\ref{defsol}, we can conclude that \eqref{Prop1} is satisfied. 
On the other hand, the hypothesis also states that, for our choice of the sequence $(g_i)_{i\in\mathbb{N}_0}$, we
 have $g_{i+1}\geqslant g_i$, for sufficiently large $i$. This, together with the way $t_1$ and $t_2$ were chosen, 
proves our claim. This establishes the validity of \eqref{momentscompl} with $m=1$
	for this choice of $t_1,t_2$ and $g_i=iq_i$, $i=0,1,2,\dots$.  But the first integral on the 
	right-hand side of \eqref{momentscompl} is bounded by a constant depending only on $T'$, and, by the hypothesis, 
	for some $C>0$ independent of the particular choice of $t_1,t_2$, we have
	$$
	\int_{t_1}^{t_2}\sum_{i=1}^{\infty} (g_{i+1}-g_i)x(s)k_iM_i(s)\,ds\leqslant
	C\int_{t_1}^{t_2}\sum_{i=1}^{\infty}g_iM_i(s)\,ds\,.
	$$
	Therefore, by \eqref{momentscompl}, there is another positive constant $C$ depending only of $T'$ such that,
	$$
	Q(t_2)-Q(t_1)\leqslant C\left(1+\int_{t_1}^{t_2}Q(s)\,ds\right)\,,
	$$
	so that, by fixing $t_0$ for which $Q(t_0)$ is finite, we have, by Gronwall inequality, 
	$$
	0\leqslant Q(t)\leqslant (C+Q(t_0))(1+CT'e^{CT'}),
	$$
	for $t$ a.e. in $(0,T')$. Thus, $Q\in L^\infty(0,T')$.
	Using this fact again in \eqref{momentscompl} together with our hypothesis, we conclude that $Q$ is, 
	in fact, in $C((0,T'))$ and it is absolutely continuous on this interval. This completes the proof.
\end{proof}
\section{Differentiability}
We state the first order differentiability properties of solutions for the silicosis system under two different sets of assumptions. 
In the first one, we can draw a conclusion slightly stronger than in the second one.

\begin{prop}\label{diff1}
Let $(k_i)_{i\in\mathbb{N}_0}$ be a nonnegative sequence such that $k_i=O(i).$ Let $(q_i)_{i\in\mathbb{N}_0}$ be bounded. 
Let $y$ be a solution of \eqref{silsys} in some interval $[0,T)$, $0<T\leqslant \infty.$ Then,
$x\equiv y_1$ and $M_i\equiv y_{i+1},$ $i\in\mathbb{N}_0$, are $C^1$ functions on $[0,T)$ and satisfy the equations 
\eqref{silsys} in the classical sense that is, in their differential form, for all $t\in[0,T)$.
\end{prop}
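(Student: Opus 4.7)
The strategy is to reduce everything to showing that the integrands appearing in the integral equations \eqref{M0t}, \eqref{Mit}, \eqref{xt} of Definition~\ref{defsol} are continuous functions of $s$ on $[0,T)$; once continuity is in hand, the fundamental theorem of calculus immediately upgrades the integral equations to pointwise differential equations and gives the $C^1$ regularity.

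For the $M_i$ equations this is essentially immediate. By Definition~\ref{defsol}(ii) each coordinate function $x,M_0,M_1,\dots$ is continuous on $[0,T)$, so the integrands $k_0 x M_0 + (p_0+q_0)M_0$ and $k_{i-1}xM_{i-1}-k_ixM_i-(p_i+q_i)M_i$ are continuous. Applying the fundamental theorem of calculus to \eqref{M0t} and \eqref{Mit} yields $\dot M_i$ pointwise on $[0,T)$, in agreement with \eqref{silsys}.

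The substantive step is continuity of the integrand in \eqref{xt}, i.e.\ of
\[
t\longmapsto -x(t)\sum_{i=0}^\infty k_iM_i(t)+\sum_{i=0}^\infty iq_iM_i(t).
\]
Here I would exploit the hypotheses: $k_i=O(i)$ gives constants $C_k,N_0$ with $k_i\leqslant C_k i$ for $i\geqslant N_0$, and boundedness of $(q_i)$ gives $Q:=\sup_i q_i<\infty$, so each tail is dominated by a constant multiple of the tail $\sum_{i\geqslant n} iM_i(t)$. By Corollary~\ref{unifconv}, the series $\sum_{i=1}^\infty iM_i(t)$ is uniformly convergent on every compact subinterval $[0,T']\subset[0,T)$, hence so are $\sum k_iM_i(t)$ and $\sum iq_iM_i(t)$. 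Since each partial sum is continuous (finite sum of continuous functions), the uniform limits are continuous on $[0,T']$, and continuity of $x$ from Definition~\ref{defsol}(ii) finishes the continuity of the integrand. One last application of the fundamental theorem of calculus to \eqref{xt} gives $\dot x$ pointwise on $[0,T)$, recovering the last equation of \eqref{silsys}.

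The only mildly delicate point is the passage from summability in $L^1_{\mathrm{loc}}$ (guaranteed by Definition~\ref{defsol}(iii)) to the uniform convergence needed for continuity of the sums; this is precisely where Corollary~\ref{unifconv}, together with the growth assumptions $k_i=O(i)$ and $(q_i)\in\ell^\infty$, does the work, and it is also the reason this proposition requires these stronger hypotheses than mere existence.
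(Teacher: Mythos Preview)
Your proposal is correct and follows essentially the same approach as the paper: use continuity of the individual coordinates for the $M_i$ equations, and for the $x$ equation invoke the hypotheses $k_i=O(i)$, $(q_i)$ bounded together with Corollary~\ref{unifconv} to obtain continuity of the two series, then apply the fundamental theorem of calculus. The only cosmetic difference is that the paper cites Proposition~\ref{abscont1} for the continuity of $Q=\sum iq_iM_i$, whereas you argue it directly from the same uniform-convergence bound; your route is if anything slightly more streamlined.
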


\begin{proof}
That $M_i$, $i\in\mathbb{N}_0$, are $C^1$ in $[0,T)$ is clear from the continuity of $x$ and $M_i$  
stated in (ii) of Definition~\ref{defsol} and the equations \eqref{M0t}, \eqref{Mit} from (iv) in the same definition. 
It remains to prove the regularity of $x$. 
The continuity of $x\sum_{i=1}^\infty k_iM_i$ is a consequence of the continuity of $x$, 
the assumptions, and Corollary~\ref{unifconv}. The continuity of $Q=\sum_{i=1}^\infty iq_iM_i$, 
under these hypotheses was stated in Proposition \ref{abscont1}. By \eqref{xt} in Definition~\ref{defsol} 
we conclude that $x\in C^1([0,T))$.
\end{proof}

\begin{prop}\label{diff2}
	Let $(k_i)_{i\in\mathbb{N}_0}$ and $(q_i)_{i\in\mathbb{N}_0}$ satisfy the 
	hypothesis of Proposition~\ref{abscont2} and $k_i=O(i)$.  
	Let $y$ be a solution of \eqref{silsys} in some interval $[0,T)$, $0<T\leqslant \infty.$ 
	Then, $x\equiv y_1$ and $M_i\equiv y_{i+1},$ $i\in\mathbb{N}_0$, are $C^1$ on $(0,T)$ and 
	they satisfy equations \eqref{silsys} in the classical sense that is, in their differential form, 
	for all $t\in(0,T)$.
\end{prop}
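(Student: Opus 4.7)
The proof plan closely mirrors that of Proposition~\ref{diff1}, with the only substantive change being that the absolute continuity of $Q$ is now guaranteed on compact subintervals of the \emph{open} interval $(0,T)$ rather than on $[0,T)$, so the conclusion is necessarily weaker at the left endpoint.

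First I would handle the regularity of the cohort components $M_i$. By Definition~\ref{defsol}(ii), each $M_i$ and $x$ is continuous on $[0,T)$. From \eqref{M0t} and \eqref{Mit} in Definition~\ref{defsol}(iv), each $M_i(t)$ is expressed as an initial value plus an integral of a continuous function (a product of the continuous functions $x$, $M_{i-1}$, $M_i$ together with constants $k_{i-1}$, $k_i$, $p_i$, $q_i$). The fundamental theorem of calculus then yields that each $M_i \in C^1([0,T))$ and satisfies the corresponding ODE of \eqref{silsys} in the classical sense on $[0,T)$. Note that for this part the assumption $k_i=O(i)$ plays no role—only continuity of the finitely many factors appearing in each $M_i$-equation is needed.

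The substantive part concerns $x$. In view of \eqref{xt}, it is enough to show that the two infinite sums $x\sum_{i=0}^\infty k_i M_i$ and $Q := \sum_{i=1}^\infty i q_i M_i$ define continuous functions of $t$ on $(0,T)$. For the first sum, the assumption $k_i = O(i)$ gives a constant $K$ with $k_i M_i(t) \leqs K\,iM_i(t)$, and by Corollary~\ref{unifconv} the series $\sum_{i=1}^\infty iM_i(t)$ converges uniformly on every compact subinterval of $[0,T)$. Weierstrass M-test then yields uniform convergence of $\sum_{i=0}^\infty k_i M_i$ on compact subintervals, hence continuity of the sum; multiplication by the continuous $x$ preserves continuity on $[0,T)$. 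For $Q$, I invoke Proposition~\ref{abscont2} directly: under the hypotheses $(i+1)q_{i+1}\geqs iq_i$ eventually and $((i+1)q_{i+1}-iq_i)k_i = O(iq_i)$, $Q$ is absolutely continuous, and in particular continuous, on every compact subinterval of $(0,T)$.

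Combining these two facts with the integral equation \eqref{xt}, the integrand in \eqref{xt} is continuous on $(0,T)$, so by the fundamental theorem of calculus $x$ is differentiable there with
\[
\dot{x}(t) = \alpha - x(t)\sum_{i=0}^\infty k_i M_i(t) + \sum_{i=0}^\infty i q_i M_i(t),
\]
and $\dot{x}$ is continuous on $(0,T)$; thus $x \in C^1((0,T))$. The main obstacle—and the reason we lose control at $t=0$—is precisely that Proposition~\ref{abscont2} does not allow us to extend continuity of $Q$ down to the initial time: if $Q(0)$ is infinite (which is compatible with $y_0 \in X_+$) then $Q$ cannot be continuous at $0$, and therefore $\dot{x}$ need not extend continuously to $t=0$. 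This is why the conclusion here is stated on $(0,T)$ rather than $[0,T)$ as in Proposition~\ref{diff1}.
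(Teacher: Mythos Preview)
Your proof is correct and follows essentially the same approach as the paper's, which simply refers back to the argument for Proposition~\ref{diff1} with Proposition~\ref{abscont2} in place of Proposition~\ref{abscont1}. Your version is in fact a more detailed write-up of that same argument, including the explicit reason why the conclusion is restricted to $(0,T)$.
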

\begin{proof}
	This is just a consequence of the same arguments used in the proof of the previous 
	proposition together with Proposition~\ref{abscont2}.
\end{proof}
\begin{coro}
	Let $(k_i)_{i\in\mathbb{N}_0}$, $(p_i)_{i\in\mathbb{N}_0},$ and
	$(q_i)_{i\in\mathbb{N}_0}$ be nonnegative sequences such that,
	$k_i=O(i)$, and, for sufficiently large $i$, $q_i=a i^\eta$, 
	for some $a, \eta>0$. Then, the same conclusions in Proposition~\ref{diff2} hold.
\end{coro}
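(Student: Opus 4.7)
The plan is to verify that the specific choice $q_i = ai^\eta$ (for large $i$) together with $k_i = O(i)$ fulfils all the hypotheses of Proposition~\ref{diff2}, so that the conclusion follows by directly invoking that proposition. Since Proposition~\ref{diff2} itself demands that $(k_i)$ and $(q_i)$ satisfy the hypotheses of Proposition~\ref{abscont2}, the substantive content reduces to checking two conditions: the eventual monotonicity $(i+1)q_{i+1}\geqs iq_i$ for large $i$, and the growth bound $((i+1)q_{i+1}-iq_i)k_i = O(iq_i)$.

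For the monotonicity, with $iq_i = ai^{\eta+1}$ and $\eta>0$, the exponent $\eta+1>1$, so $t\mapsto at^{\eta+1}$ is strictly increasing on $[1,\infty)$, yielding $(i+1)q_{i+1}-iq_i = a\bigl((i+1)^{\eta+1}-i^{\eta+1}\bigr)\geqs 0$ for $i$ large enough that $q_i = ai^\eta$ holds exactly. For the growth bound, I would apply the mean value theorem to $t\mapsto t^{\eta+1}$ on $[i,i+1]$, giving some $\xi_i\in(i,i+1)$ with
\[
(i+1)^{\eta+1}-i^{\eta+1} = (\eta+1)\xi_i^\eta \leqs (\eta+1)(i+1)^\eta = O(i^\eta).
\]
Multiplying by $k_i = O(i)$, I obtain
\[
\bigl((i+1)q_{i+1}-iq_i\bigr)k_i = O(i^\eta)\cdot O(i) = O(i^{\eta+1}) = O(iq_i),
\]
which is exactly the bound required.

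Having verified both conditions of Proposition~\ref{abscont2}, together with the assumed $k_i=O(i)$, all hypotheses of Proposition~\ref{diff2} are in force. Applying that proposition yields that $x$ and each $M_i$ are $C^1$ on $(0,T)$ and satisfy \eqref{silsys} in classical differential form on $(0,T)$.

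There is no real obstacle here: the only mild point to be careful about is that the hypothesis $q_i = ai^\eta$ holds only for sufficiently large $i$, but both conditions of Proposition~\ref{abscont2} are themselves only required to hold eventually, so the finitely many small-$i$ values of $q_i$ (which are merely assumed nonnegative) do not affect the argument.
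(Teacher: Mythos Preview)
Your proposal is correct and follows essentially the same route as the paper: verify that the power-law choice of $q_i$ satisfies the hypotheses of Proposition~\ref{abscont2} (hence of Proposition~\ref{diff2}) and invoke that proposition. The paper compresses the verification into the single observation that $\dfrac{(i+1)^{\eta+1}-i^{\eta+1}}{i^{\eta+1}}\,i \to \eta+1$, which simultaneously encodes the eventual positivity of $(i+1)q_{i+1}-iq_i$ and the bound $((i+1)q_{i+1}-iq_i)k_i=O(iq_i)$; your mean-value-theorem argument makes these two checks explicit but is the same computation.
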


\begin{proof}
	From the fact that,
	as $i\to\infty,$
	$$
	\frac{(i+1)^{\eta+1}-i^{\eta+1}}{i^{\eta+1}}\, i \to \eta+1\,,
	$$
 and from Proposition \ref{diff2}, the result immediately follows. \end{proof}
\section{Uniqueness}
\begin{theorem}\label{uniqtheo}
	Let $(k_i)_{i\in\mathbb{N}_0}$ and $(g_i)_{i\in\mathbb{N}_0}$ be as in the hypothesis of the 
	existence Theorem~\ref{existence}. Assume furthermore that $k_i=O(i)$ and $ik_i=O(g_i)$. 
	Let also $(p_i),(q_i)$ be nonnegative sequences such that, $(p_i)_{i\in\mathbb{N}_0}$ is 
	bounded and $q_i=O(i)$. Then, for each  $y_0=(x_0,M_{00},M_{01},\dots)\geqslant 0$, 
	satisfying $\sum_{i=0}^\infty g_iy_{0i}<\infty$, there is exactly one solution 
	$y=(x,M_0,M_1,\dots)$ of \eqref{silsys} on $[0,\infty)$ satisfying $y(0)=y_0$.
\end{theorem}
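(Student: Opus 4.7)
The natural approach is a Gronwall estimate on a weighted $\ell^1$-difference of two solutions. Fix $T>0$ arbitrarily and suppose $y=(x,M_0,M_1,\dots)$ and $\tilde y=(\tilde x,\tilde M_0,\tilde M_1,\dots)$ are two solutions of \eqref{silsys} on $[0,\infty)$ sharing the initial datum $y_0$. Set $\Delta x:=x-\tilde x$, $\Delta M_i:=M_i-\tilde M_i$, and choose the weight $W_i:=i+1$. I would consider
\begin{equation*}
\Phi(t):=|\Delta x(t)|+\sum_{i=0}^\infty W_i|\Delta M_i(t)|,
\end{equation*}
which is finite on $[0,T]$ since $\|y(t)\|,\|\tilde y(t)\|\leqslant \|y_0\|+(\alpha+r)T$ by Corollary~\ref{corol5.2}, and show that $\Phi\equiv 0$ on $[0,T]$; the arbitrariness of $T$ then gives the result.

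To derive a differential inequality for $\Phi$, I would subtract the equations for $y$ and $\tilde y$, use the identity $xM_i-\tilde x\tilde M_i=x\Delta M_i+\Delta x\,\tilde M_i$, multiply the $i$-th equation for $\Delta M_i$ by $W_i\sgn(\Delta M_i)$, the equation for $\Delta x$ by $\sgn(\Delta x)$, and sum. The essential cancellation uses that $\sgn(\Delta M_i)\Delta M_i=|\Delta M_i|$ while $\sgn(\Delta M_i)\Delta M_{i-1}$ is only bounded by $|\Delta M_{i-1}|$; after re-indexing this yields the telescoping bound
\begin{equation*}
\sum_{i=0}^\infty W_i\sgn(\Delta M_i)\bigl(k_{i-1}x\Delta M_{i-1}-k_ix\Delta M_i\bigr)\leqslant x\sum_{i=0}^\infty (W_{i+1}-W_i)k_i|\Delta M_i|=x\sum_{i=0}^\infty k_i|\Delta M_i|.
\end{equation*}
Combining this with the $x\sum k_i|\Delta M_i|$ coming from $\sgn(\Delta x)\Delta\dot x$ makes the net coefficient of $x|\Delta M_i|$ equal to $2k_i$. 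The dissipation $-\sum_iW_i(p_i+q_i)|\Delta M_i|$ absorbs the term $\sum_iiq_i|\Delta M_i|$ produced by $\sgn(\Delta x)\Delta\dot x$, because $W_i-i=1\geqslant 0$ and $p_i,q_i\geqslant 0$, leaving a nonpositive contribution that can be dropped. One arrives at
\begin{equation*}
\dot\Phi(t)\leqslant 2x(t)\sum_{i=0}^\infty k_i|\Delta M_i(t)|+|\Delta x(t)|\sum_{i=0}^\infty (W_i+W_{i+1})k_i\tilde M_i(t).
\end{equation*}

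The two coefficients are controlled on $[0,T]$ using the uniqueness-theorem hypotheses. Since $k_i=O(i)$, I get $k_i\leqslant CW_i$, so with $x(t)\leqslant \|y_0\|+(\alpha+r)T$ the first sum is bounded by $K_1\sum_iW_i|\Delta M_i|\leqslant K_1\Phi(t)$. For the second, the hypothesis $ik_i=O(g_i)$ yields $(W_i+W_{i+1})k_i=O(g_i)$, whence by the existence-theorem estimate \eqref{solest}
\begin{equation*}
\sum_{i=0}^\infty (W_i+W_{i+1})k_i\tilde M_i(t)\leqslant K_2\sup_{s\in[0,T]}\sum_{i=0}^\infty g_i\tilde M_i(s)\leqslant K_3.
\end{equation*}
Hence $\dot\Phi\leqslant K(T)\Phi$ on $[0,T]$, and since $\Phi(0)=0$, Gronwall's inequality forces $\Phi\equiv 0$.

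The main technical obstacle is justifying the formal term-by-term differentiation of the series $\sum_iW_i|\Delta M_i|$. Each $|\Delta M_i|$ is absolutely continuous (by Definition~\ref{defsol}(iv)) with $\tfrac{d}{dt}|\Delta M_i|=\sgn(\Delta M_i)\Delta\dot M_i$ a.e., but the interchange with the infinite summation needs care. I would handle this by truncation: work first with $\Phi_N(t):=|\Delta x(t)|+\sum_{i=0}^NW_i|\Delta M_i(t)|$, establish the analogous integrated inequality for $\Phi_N$ together with a tail remainder controlled uniformly in $N$ via \eqref{solest} and the moments identity of Theorem~\ref{momentsMi}, and then pass to the limit $N\to\infty$ by monotone/dominated convergence before applying Gronwall.
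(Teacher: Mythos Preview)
Your overall strategy is sound and close to the paper's, but there is one genuine gap. You apply the estimate \eqref{solest} to $\tilde y$, writing $\sum_i (W_i+W_{i+1})k_i\tilde M_i(t)\leqslant K_2\sup_{[0,T]}\sum_i g_i\tilde M_i$. However, \eqref{solest} is a property of the \emph{particular} solution constructed in Theorem~\ref{existence} as a limit of truncations; it is not established a priori for an arbitrary solution $\tilde y$ satisfying Definition~\ref{defsol}. The paper is explicit about this asymmetry: it fixes $y$ to be the solution built in Theorem~\ref{existence} and uses the decomposition $y_1y_i-\tilde y_1\tilde y_i=\tilde y_1 z_i+y_i z_1$, so that the $g_i$-moment bound is needed only for the coefficients $y_i$, while only the $X$-boundedness of $\tilde y_1$ is required. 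Your decomposition $xM_i-\tilde x\tilde M_i=x\Delta M_i+\Delta x\,\tilde M_i$ puts the burden on the wrong solution. The fix is trivial---just use $xM_i-\tilde x\tilde M_i=\tilde x\,\Delta M_i+\Delta x\,M_i$ instead---so that the coefficient of $|\Delta x|$ becomes $\sum_i(W_i+W_{i+1})k_i M_i$, which is controlled by \eqref{solest} via $ik_i=O(g_i)$, and the coefficient of $|\Delta M_i|$ becomes $\tilde x$, which is bounded simply because $\tilde y$ is a solution in $X$.

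Apart from this, your route differs from the paper's in how $|\Delta x|$ is handled. You include $|\Delta x|$ directly in $\Phi$ and differentiate the $x$-equation, relying on the exact cancellation $-\sum_i W_i q_i|\Delta M_i|+\sum_i iq_i|\Delta M_i|\leqslant 0$ (which holds with $W_i=i+1$ without any growth assumption on $q_i$). The paper instead works only with $\sum_{i}(i+1)|\Delta M_i|$ and estimates $|z_1|=|\Delta x|$ separately via the integrated mass balance \eqref{Uyt0}; this is precisely where the hypotheses $p_i=O(1)$ and $q_i=O(i)$ enter their argument (see \eqref{z1ziintzi}). Your approach is thus somewhat more direct and, once the decomposition is corrected, appears not to use those two hypotheses at all in the Gronwall estimate. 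The truncation you sketch at the end is essentially what the paper does; note that in the paper's version the nontrivial boundary term $Nk_{N-2}(y_1y_N-\tilde y_1\tilde y_N)\sgn(z_{N+1})$ must be shown to vanish as $N\to\infty$, which they do via \eqref{momentscomplgi}, and you would need the analogous step (controlled by $ik_i=O(g_i)$ and \eqref{solest} for $y$).
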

\begin{proof}
	Let $y$ be a solution as stated above, proved to exist in Theorem~\ref{existence}, 
	and let $\tilde{y}$ be another solution with $\tilde{y}(0)=y_0.$ Define $z(t)=y(t)-\tilde{y}(t)$. 
	Fix any finite $T>0$. By Definition~\ref{defsol} all the coordinates of $y,\tilde{y}$ and, 
	therefore, of $z$ are absolutely continuous functions on $[0,T]$. Hence, $t\mapsto |z(t)|$ has 
	the same property and furthermore, for $t\in[0,T]$ a.e., 
	$$\frac{d}{dt}|z(t)|=(\sgn z(t))\frac{dz}{dt}(t).$$
	We then know that the differential versions of equations \eqref{M0t} and \eqref{Mit} from the 
	definition of solution, Definition~\ref{defsol}, are satisfied for a.e. $t\in[0,T]$.
	Fix any finite $N\geqslant 2.$ Having in mind that $y_1=x$ and  $y_i=M_{i-2}$, $i=2,3,\dots$, and 
	the same for $\tilde{y}$, rewriting each one of those equations using $y_i,\tilde{y}_i$ notation, we can obtain, for a.e. $t\in[0,T]$,
	\begin{align}
	\nonumber\frac{d}{dt}\sum_{i=0}^{N-2}(i+1)|M_i-\tilde{M}_i| 
	&=\;\frac{d}{dt}\sum_{i=2}^{N}(i-1)|z_i|\\
	\nonumber
	&=\;\sum_{i=2}^{N}(i-1)\frac{dz_i}{dt}\sgn(z_i)\\
	&=\; \sum_{i=2}^N k_{i-2}(y_1y_{i}-\tilde{y}_1\tilde{y}_{i})\left[i\sgn(z_{i+1})-(i-1)\sgn(z_i)\right]\nonumber \\ 
	&\quad -Nk_{N-2}(y_1y_N-\tilde{y}_1\tilde{y}_N)\sgn(z_{N+1})\nonumber\\ \label{uniq1}
	&\quad -\sum_{i=2}^N (i-1)(p_{i-2}+q_{i-2})|z_i|.
	\end{align}
	Now for the first sum in the right-hand side of the last equation we observe that, for $i=2,3,\dots$,
	\begin{align*}
(y_1y_{i}&-\tilde{y}_1\tilde{y}_{i})\left[i\sgn(z_{i+1})-(i-1)\sgn(z_i)\right] =\\ 
= &\;\; (\tilde{y}_1z_i+y_iz_1)\left[i\sgn(z_{i+1})-(i-1)\sgn(z_i)\right]\\
= &\;\; (\tilde{y}_1|z_i|\sgn(z_i)+y_iz_1)\left[i\sgn(z_{i+1})-(i-1)\sgn(z_i)\right]\\ 
= &\;\; \tilde{y}_1|z_i|\left[i\sgn(z_iz_{i+1})-(i-1)\right]+y_iz_1\left[i\sgn(z_{i+1})-(i-1)\sgn(z_i)\right]\\ 
\leqslant &\;\; \tilde{y}_1|z_i|+y_i|z_1|(2i-1)\,,
	\end{align*}
and using this in \eqref{uniq1} we obtain, after integrating in $[0,t]$, for every $t\in[0,T]$,
\begin{align}\nonumber
\sum_{i=2}^{N}(i-1)|z_i(t)|\leqslant& \int_0^t \biggl[\tilde{y}_1\sum_{i=2}^N k_{i-2}|z_i|+|z_1|\sum_{i=2}^N k_{i-2}(2i-1)y_i\biggr]ds\\
&-\int_0^t Nk_{N-2}(y_1y_N-\tilde{y}_1\tilde{y}_N)\sgn(z_{N+1})ds\,.\label{uniq2}
\end{align}
Since, by hypothesis, $ik_i=O(g_i)$, and $y$ satisfies \eqref{solest}, we know that
\begin{equation}\label{uniq3}
\sup_{t\in [0,T]}\sum_{i=2}^\infty k_{i-2}(2i-1)y_i<\infty\,.
\end{equation}
By \eqref{Uyt0},
$$
z_1(t)+\sum_{i=2}^\infty(i-1)z_i(t)=-\int_0^t\sum_{i=2}^\infty((i-1)p_{i-2}+q_{i-2})z_i(s)\,ds,
$$
and using the hypothesis on $(p_i)$ and $(q_i)$, we obtain, for all $t\in[0,T],$
\begin{equation}\label{z1ziintzi}
|z_1(t)|\leqslant \sum_{i=2}^\infty(i-1)|z_i(t)|
+C_0\int_{0}^t \sum_{i=2}^\infty(i-1)|z_i(s)|\,ds,
\end{equation}
for some positive constant $C_0$. Therefore, for $t\in[0,T],$
\begin{align}
\int_0^t |z_1(s)|ds&\leqslant 
\int_0^t \sum_{i=2}^\infty(i-1)|z_i(s)|\,ds+C_0
\int_0^t \int_0^s \sum_{i=2}^\infty(i-1)|z_i(\tau)|\,d\tau ds \nonumber\\
&=\int_0^t (1+C_0(t-s) )\sum_{i=2}^\infty(i-1)|z_i(s)|\,ds \nonumber\\
&\leqslant C_1\int_0^t \sum_{i=2}^\infty(i-1)|z_i(s)|\,ds\,,\label{z1zi}
\end{align}
for some positive ($T$-dependent) constant, $C_1$.

\medskip

For the last term in \eqref{uniq2}, we use \eqref{momentscomplgi} to prove that it 
converges to zero as $N\to \infty.$ In fact, from the definition of solution and the 
bounded convergence theorem we have that
\begin{multline*}
\sum_{i=m}^\infty iM_i(t)-\sum_{i=m}^\infty iM_i(0)+\int_{0}^{t}\sum_{i=m}^\infty i(p_i+q_i)M_i(s)\,ds\\
-\int_{0}^{t}\sum_{i=m}^{\infty} x(s)k_iM_i(s)\,ds\,\to \, 0\,,\quad\text{as}\quad m\to \infty\,,
\end{multline*}
and thus,
$$
\int_{0}^{t}mx(s)k_{m-1}M_{m-1}(s)\,ds\;\to\; 0,\quad\text{as}\quad m\to\infty\,,
$$
and this is equivalent to
$$
\int_0^t Nk_{N-2}y_1(s)y_N(s)ds\;\to\; 0,\quad\text{as}\quad N\to\infty\,.
$$
Proceeding similarly with respect to $\tilde{y}$, and since
$$
\left|\int_0^t Nk_{N-2}(y_1y_N-\tilde{y}_1\tilde{y}_N)\sgn(z_{N+1})ds\right|
\leqslant \int_0^t Nk_{N-2}y_1y_N ds+\int_0^t Nk_{N-2}\tilde{y}_1\tilde{y}_N ds,
$$ 
we conclude that
\begin{equation}\label{NkNm2}
\int_0^t Nk_{N-2}(y_1y_N-\tilde{y}_1\tilde{y}_N)\sgn(z_{N+1})ds\;\to\; 0,\quad\text{as}\quad N\to\infty\,.
\end{equation}
Using in \eqref{uniq2} the boundedness of $\tilde{y}$ and the hypothesis on $k_i$, together with \eqref{uniq3}, 
\eqref{z1zi}, and \eqref{NkNm2}, we deduce that there exits some constant $C>0$ such that, for each $t\in[0,T]$,
$$
\sum_{i=2}^{\infty}(i-1)|z_i(t)|\leqslant C\int_0^t \sum_{i=2}^{\infty}(i-1)|z_i(s)|\,ds\,,
$$
and thus, since by the definition of $z$, $z_i(0)=0$, we conclude that
$$
\sum_{i=2}^{\infty}(i-1)|z_i(t)|=0,
$$
for all $t\in [0,T]$. Hence, for $i=2,3,\dots,$ $z_i(t)=0$, which means that $y_i(t)=\tilde{y}_i(t)$. 
But then, by \eqref{z1ziintzi} we also have $z_1(t)=0$, which means that $y_1(t)=\tilde{y}_1(t)$. 
Since $T>0$ can be chosen arbitrarily large, the proof is complete.
\end{proof}

\begin{remark}
Observe that the uniqueness theorem in \cite{BCP} (Theorem~3.6) for the Becker-D\"oring equations, which
 corresponds to the uniqueness result, Theorem~\ref{uniqtheo}, in our present setting, 
does not require any type of restrictions on the fragmentation coefficients $b_r$, in contrast with our 
restrictions on the coefficients $p_i$ and $q_i.$ The reason for this is that, to estimate $y_1$ in 
terms of $y_i$, $i\geqslant 2$, that is, to obtain \eqref{z1ziintzi}, we used the mass balance equation 
\eqref{Uyt0} thus requiring some hypothesis on the coefficients $p_i, q_i$, while in \cite{BCP} to estimate 
$c_1$ in terms of the other  coordinates $c_r$, the authors were able to use the conservation of mass property 
which did not required restrictions on the growth rate of the $b_r$ coefficients.
\end{remark}

\begin{coro}\label{nicecorol}
Let $(p_i),$ $(q_i),$ $(k_i)$  be nonnegative sequences satisfying
$p_i=O(1)$, $q_i=O(i),$ and $k_i=O(i^\gamma)$ for some $\gamma \in [0,1].$  
Then, for each $y_0=(y_{0i})_{i\in\mathbb{N}}\in X_+$ such that
\begin{equation}
\sum_{i=2}^\infty (i-1)^{1+\gamma}y_{0i}<\infty\,,\label{higherinitialmoment}
\end{equation}
there is a unique local solution $y$ of the silicosis system \eqref{silsys} satisfying $y(0)=y_0$ and, moreover, 
it is extendable to $[0,\infty)$. In particular, if $(k_i)$ is bounded (i.e., if $\gamma =0$) then this uniqueness
result holds true for solutions with initial conditions with finite mass, that is, for nonnegative $y_0$ with
$$
y_{01}+\sum_{i=2}^\infty (i-1)y_{0i}<\infty\,.
$$
\end{coro}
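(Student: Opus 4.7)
The plan is to apply the existence Theorem~\ref{existence} together with the uniqueness Theorem~\ref{uniqtheo} to a weight sequence tailored to the assumed growth of $(k_i)$: take $g_i := (i+1)^{1+\gamma}$ for $i\in\mathbb{N}_0$. The main task is then a routine verification that this $(g_i)$, together with the given $(p_i)$, $(q_i)$, $(k_i)$ and $y_0$, satisfies all hypotheses of both theorems.

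First, I would verify the structural conditions on $(g_i)$ and $(k_i)$. Since $\gamma\geqslant 0$, the function $x\mapsto x^{1+\gamma}$ is convex with derivative at least $1+\gamma\geqslant 1$ on $[1,\infty)$, so $g_{i+1}-g_i\geqslant 1$, giving the required $\delta>0$. By the mean value theorem, $g_{i+1}-g_i=O((i+1)^{\gamma})$, and combined with $k_i=O(i^\gamma)$ this gives $(g_{i+1}-g_i)k_i=O(i^{2\gamma})$, which is $O(g_i)=O(i^{1+\gamma})$ precisely because $2\gamma\leqslant 1+\gamma$ when $\gamma\leqslant 1$. This is exactly where the upper bound on $\gamma$ enters. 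Similarly $ik_i=O(i^{1+\gamma})=O(g_i)$, and since $\gamma\leqslant 1$ we also have $k_i=O(i^\gamma)=O(i)$. The assumed bounds $p_i=O(1)$ and $q_i=O(i)$ are exactly the additional requirements of Theorem~\ref{uniqtheo}.

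Next, I would match the initial-data requirement. With the paper's indexing $y_1=x$, $y_{j+2}=M_j$, the hypothesis \eqref{higherinitialmoment} translates to
\[
\sum_{j\geqslant 0}(j+1)^{1+\gamma}M_{0j}=\sum_{j\geqslant 0}g_j M_{0j}<\infty,
\]
which is precisely the moment hypothesis on $y_0$ needed to invoke Theorem~\ref{existence}. That theorem yields a solution on $[0,\infty)$ with $y(0)=y_0$, and Theorem~\ref{uniqtheo} then supplies uniqueness. For the particular case $\gamma=0$ the weight simplifies to $g_j=j+1$, and \eqref{higherinitialmoment} reduces to $\sum_{i\geqslant 2}(i-1)y_{0i}<\infty$; combined with $y_{01}<\infty$, this is exactly $\|y_0\|<\infty$, i.e.\ $y_0\in X_+$, which gives the second statement of the corollary.

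There is no genuine obstacle in the argument; the only subtlety is the choice of $(g_i)$. The constraint $ik_i=O(g_i)$ forces $g_i$ to grow at least as fast as $i^{1+\gamma}$, while the constraint $(g_{i+1}-g_i)k_i=O(g_i)$ caps the admissible exponent at $1+\gamma$ via the bound $\gamma\leqslant 1$, so $g_i=(i+1)^{1+\gamma}$ is the minimal admissible choice and yields the weakest moment condition on $y_0$ compatible with the two theorems.
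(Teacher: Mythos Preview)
Your proposal is correct and matches the paper's intended approach. The paper does not write out a proof of this corollary, but in the immediately following Proposition~\ref{invariance} it explicitly uses the same weight sequence (written there in the $y$-indexing as $g_1=1$, $g_i=(i-1)^{1+\gamma}$ for $i\geqslant 2$, which is your $g_j=(j+1)^{1+\gamma}$ in the $M$-indexing), confirming that your choice and verification are exactly what is expected.
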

The following result is also a consequence of the uniqueness theorem:
\begin{coro}
	Let $y_0\in X_+$ and suppose that the hypothesis of Theorem \ref{uniqtheo} are satisfied.  
	Let $y$ be the unique solution of the silicosis system \eqref{silsys} such that $y(0)=y_0$. 
	If $(y^n)_{n\in\mathbb{N}}$ is the sequence of solutions of the truncated systems \eqref{siltrunc} 
	considered in the proof of Theorem \ref{existence}, then, as $n\to\infty$, $y^n(t)\to y(t)$ 
	uniformly on compact intervals of $[0,\infty)$.
\end{coro}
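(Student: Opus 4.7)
The plan is to combine the compactness machinery already developed in the proof of Theorem~\ref{existence} with the uniqueness provided by Theorem~\ref{uniqtheo}, via a standard ``every subsequence has a subsubsequence'' argument. The key observation is that the construction in Theorem~\ref{existence} does not use any property of the full truncation sequence beyond the uniform bounds \eqref{ynnorm}, \eqref{Minest}, \eqref{dotMinestT} and \eqref{gronwall}, which are preserved by passing to any subsequence.

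Fix $T\in(0,\infty)$ and suppose, for contradiction, that $y^n\to y$ \emph{fails} to hold uniformly on $[0,T]$ in the sense to be proved. Then there exist $\varepsilon>0$ and a subsequence $(y^{n_k})$ such that $\sup_{t\in[0,T]}\|y^{n_k}(t)-y(t)\|\geqslant\varepsilon$ for all $k$. I would now feed this subsequence into the compactness machinery of Theorem~\ref{existence}: the Ascoli--Arzel\`a step gives a further subsequence $(y^{n_{k_l}})$ such that $M_i^{n_{k_l}}\to\widetilde M_i$ uniformly on $[0,T]$ for each $i$; the weak-$*$ compactness of $(x^{n_{k_l}})$ in $L^\infty(0,T)$ followed by the Cauchy-in-$C([0,T/2])$ argument produces a continuous $\widetilde x$ with $x^{n_{k_l}}\to\widetilde x$ uniformly on $[0,T/2]$. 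As in Theorem~\ref{existence}, the limit $\widetilde y=(\widetilde x,\widetilde M_0,\widetilde M_1,\ldots)$ is a solution of \eqref{silsys} on $[0,T/2]$ with $\widetilde y(0)=y_0$.

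By Theorem~\ref{uniqtheo} (whose hypotheses are assumed here), $\widetilde y=y$ on $[0,T/2]$. To upgrade coordinate-wise uniform convergence to norm convergence, I would use the tail estimate coming from \eqref{gronwall}: since $g_{i+1}-g_i\geqslant\delta$ gives $g_i\geqslant\delta i$, and since the uniqueness hypothesis $ik_i=O(g_i)$ together with $k_i=O(i)$ forces $g_i$ to grow at least like $i^2$ in the borderline case (and more generally makes $(i+1)/g_i\to 0$), the bound
\[
\sum_{i\geqslant N}(i+1)\bigl(M_i^{n_{k_l}}(t)+M_i(t)\bigr)\;\leqslant\;\Bigl(\sup_{i\geqslant N}\tfrac{i+1}{g_i}\Bigr)\cdot 2C_1e^{C_2T}
\]
tends to $0$ as $N\to\infty$ uniformly in $l$ and $t\in[0,T/2]$. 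Combined with coordinate-wise uniform convergence on $[0,T/2]$ for $x$ and each $M_i$, $i\leqslant N-1$, this yields $\sup_{t\in[0,T/2]}\|y^{n_{k_l}}(t)-y(t)\|\to 0$, contradicting the choice of $\varepsilon$. Finally, since the bounds are autonomous and extend to any $T$, the same argument on translated intervals $[T_0,T_0+T/2]$ extends the convergence to all of $[0,\infty)$.

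The main obstacle is the last tail-uniformity step: establishing that coordinate-wise uniform convergence plus the uniform moment bound \eqref{gronwall} really does imply $X$-norm convergence. The cleanest route goes through the moment $g_i$ from the hypotheses of Theorem~\ref{uniqtheo}, exploiting that $ik_i=O(g_i)$ forces $g_i/i\to\infty$ whenever $k_i$ is unbounded; if $(k_i)$ is bounded one may simply take $g_i\sim i^{1+\gamma}$ for some $\gamma>0$ (cf.\ Corollary~\ref{nicecorol} and \eqref{higherinitialmoment}) to obtain the required tail decay. Everything else is a routine subsequence-extraction argument leveraging results already proved.
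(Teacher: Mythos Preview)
Your core strategy---assume failure along a subsequence, feed that subsequence through the compactness machinery of Theorem~\ref{existence}, identify the limit with $y$ via Theorem~\ref{uniqtheo}, and derive a contradiction---is exactly the paper's argument.

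The divergence is in what you are trying to prove. You read the conclusion as uniform convergence in the $X$-norm $\|\cdot\|$ and therefore need the tail-uniformity step; that step has a genuine gap. You require $\sup_{i\geqslant N}(i+1)/g_i\to 0$ as $N\to\infty$, but the hypotheses of Theorem~\ref{uniqtheo} do not force this: with $(k_i)$ bounded, the choice $g_i=i$ satisfies every condition of Theorems~\ref{existence} and~\ref{uniqtheo}, yet $(i+1)/g_i\not\to 0$. Your proposed fix, ``one may simply take $g_i\sim i^{1+\gamma}$'', is not available: the corollary assumes the hypotheses of Theorem~\ref{uniqtheo} hold for \emph{some} fixed $(g_i)$, and the moment condition $\sum g_iM_{0i}<\infty$ on the initial data refers to \emph{that} sequence. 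Swapping in a heavier weight imposes precisely the extra higher-moment assumption \eqref{higherinitialmoment} that Corollary~\ref{nicecorol} makes explicit---an assumption you are not entitled to here. (Your other claim, that $ik_i=O(g_i)$ forces $g_i/i\to\infty$ whenever $k_i$ is unbounded, is also not obviously true without further argument, since unboundedness of $k_i$ along a sparse subsequence need not propagate to $\liminf g_i/i=\infty$.)

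The paper sidesteps this entirely: its proof stops once it has obtained, along a subsubsequence, $y_i^{n_{j'}}\to y_i$ uniformly on $[0,T]$ for each coordinate $i$. In other words, the paper establishes (and the statement is to be read as) coordinate-wise uniform convergence, not convergence in $\|\cdot\|$. Under that reading your argument through the identification $\widetilde y=y$ is complete and matches the paper; only the additional $X$-norm upgrade is problematic.
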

\begin{proof}
	In order to reach a contradiction, suppose that there is a subsequence of $(y^n)$, $(y^{n_j})$  say, for which there is 
	$T>0$ and $\eta>0$, such that, for all $j\in\mathbb{N}$, $\sup_{t\in[0,T]}|y^{n_j}(t)-y(t)|\geqslant \eta$. 
	We can take this subsequence in place of the whole approximating sequence in the proof of Theorem 
	\ref{existence} and conclude by that proof and the uniqueness theorem, that there is another subsequence 
	$(y^{n_{j'}})$ such that, for each $i\in\mathbb{N}$, as $j'\to\infty$, $y_i^{n_{j'}}(t)\to y_i(t)$, on $[0,T]$.
\end{proof}

\section{Semigroup property}

The last result we shall consider is a property that, together with the existence and the uniqueness results, is crucial for
the study of the dynamics of solutions: the semigroup property. 

Due to the fact that we could only
prove the uniqueness result stated in Theorem~\ref{uniqtheo} by imposing a more restrictive set of conditions on the
coefficients than those required for the proof of existence in Theorem~\ref{existence}, and, in particular, 
as written above in Corollary~\ref{nicecorol}, we can only proof uniqueness of solution in the full space $X_+$ 
when $(k_i)$ is a bounded sequence, 
we are going to prove the semigroup property for initial conditions $y_0$ satisfying an extra higher moment condition
like \eqref{higherinitialmoment}. In order to do this we need to start by proving that, under the assumptions
of Corollary~\ref{nicecorol}, if we take any initial condition $y_0 \in X_+^\mu := X^\mu \cap X_+,$ with $\mu \geqs 1$, where
\begin{equation}\label{normmu}
X^\mu :=  \Bigl\{y\in X \,: \|y\|_\mu:=|y_{1}| + \sum_{i=2}^\infty (i-1)^{\mu}|y_{i}|<\infty\Bigr\},
\end{equation}
then the solution stays in $X_+^\mu.$ Using this invariance property we then prove that, under those conditions,
the set of solutions in these subspaces form a $C_0$-semigroup.

\begin{prop}\label{invariance}
Let $(p_i),$ $(q_i),$ $(k_i)$  be nonnegative sequences satisfying $p_i=O(1)$, $q_i=O(i),$ and 
$k_i=O(i^\gamma)$ for some $\gamma \in [0,1].$ Take any $y_0 \in X_+^{1+\gamma}$ and let $y(\cdot)$ be the
unique solution of  \eqref{silsys} with $y(0)=y_0.$ Then $y(t)\in X_+^{1+\gamma}$ for all $t>0.$
\end{prop}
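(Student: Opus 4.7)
The plan is to produce, via the existence theorem applied with a carefully chosen weight sequence, a solution that a priori lies in $X_+^{1+\gamma}$, and then to identify it with $y$ via uniqueness.

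Set $g_i := (i+1)^{1+\gamma}$ for $i\in\mathbb{N}_0$. I would first verify the hypotheses of Theorem~\ref{existence} for this choice. Nonnegativity of $(g_i)$ is immediate. By the mean value theorem, $g_{i+1}-g_i = (1+\gamma)\xi_i^{\gamma}$ for some $\xi_i\in(i+1,i+2)$, so $g_{i+1}-g_i\geqslant 1$ (take $\delta=1$) and also $g_{i+1}-g_i = O(i^\gamma)$. Combined with the standing assumption $k_i=O(i^\gamma)$, this yields $(g_{i+1}-g_i)k_i = O(i^{2\gamma}) = O(i^{1+\gamma}) = O(g_i)$, where the inequality $2\gamma\leqslant 1+\gamma$ uses precisely the restriction $\gamma\leqslant 1$. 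From the definition \eqref{normmu} of $\|\cdot\|_{1+\gamma}$ we also have $\sum_{i\geqslant 0}g_i M_{0i} = \|y_0\|_{1+\gamma}-|x_0|<\infty$. Theorem~\ref{existence} therefore produces a solution $\tilde y$ on $[0,\infty)$ with $\tilde y(0)=y_0$ satisfying $\sup_{t\in[0,T]}\sum_{i\geqslant 0}g_i \tilde M_i(t)<\infty$ for every finite $T$; this is exactly the assertion that $\tilde y(t)\in X_+^{1+\gamma}$ for all $t\geqslant 0$.

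The remaining task is to show that this $\tilde y$ coincides with $y$. For this I would invoke Corollary~\ref{nicecorol}: the hypotheses $p_i=O(1)$, $q_i=O(i)$, $k_i=O(i^\gamma)$ with $\gamma\in[0,1]$ are part of the assumptions of the proposition, and $y_0$ satisfies \eqref{higherinitialmoment} because $y_0\in X_+^{1+\gamma}$. The corollary then guarantees at most one solution with initial datum $y_0$, so $\tilde y = y$ and hence $y(t)\in X_+^{1+\gamma}$ for all $t\geqslant 0$.

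The main ``obstacle'' is essentially just the bookkeeping: identifying a single weight sequence $(g_i)$ simultaneously admissible in the existence and uniqueness theorems. The restriction $\gamma\in[0,1]$ is used precisely once, in the estimate $(g_{i+1}-g_i)k_i=O(g_i)$; the other conditions required by Theorem~\ref{uniqtheo} (namely $k_i=O(i)$ and $ik_i=O(g_i)$) are immediate from $\gamma\leqslant 1$ and from the choice of $(g_i)$.
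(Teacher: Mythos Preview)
Your proof is correct and follows essentially the same strategy as the paper: choose a weight $(g_i)$ compatible with $\|\cdot\|_{1+\gamma}$, invoke Theorem~\ref{existence} to obtain a solution carrying the moment bound \eqref{solest}, and then identify it with the given $y$ via uniqueness (Corollary~\ref{nicecorol}). The only cosmetic difference is that the paper re-runs the Gronwall estimates \eqref{novaeq}--\eqref{giMiest} from the existence proof explicitly, whereas you simply cite the conclusion \eqref{solest}; your version is the cleaner packaging of the same argument.
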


\begin{proof}
With the coefficients satisfying these assumptions and taking the sequence $(g_i)$
defined by $g_1=1$ and $g_i = (i-1)^{1+\gamma}$ if $i\geqs 2,$ 
all conditions of  Theorem~\ref{existence} (existence) and Theorem~\ref{uniqtheo} (uniqueness) are satisfied.
Truncating the initial condition and working with the solutions
to the $n_j$-truncated systems we can repeat the argument from \eqref{novaeq} until \eqref{giMiest}
with the $(g_i)$ above, concluding from \eqref{giMiest} that the solution $y$ of the infinite
dimensional system obtained by making $n_j\to\infty$ satisfies
$y_1(t) + \sum_{i=2}^\infty (i-1)^{1+\gamma}y_i(t) < C_1e^{C_2 t},$ for all $t\in [0, T]$ 
and with $C_1$ and $C_2$ the same as before.
Since under these conditions solutions are unique, 
through $y_0$ there is only one solution $y$ and
the argument above proves that $y(t)\in X_+^{1+\gamma}$, for all $t$.
\end{proof}

\begin{theorem}\label{semigroup}
Let $(p_i),$ $(q_i),$ $(k_i)$  be nonnegative sequences satisfying $p_i=O(1)$, $q_i=O(i),$ and 
$k_i=O(i^\gamma)$ for some $\gamma \in [0,1].$ Let $y_0\in X_+^{1+\gamma}.$
Denote by $T(\cdot)y_0$ the unique solution $y(\cdot\, ; y_0)$ of \eqref{silsys} satisfying the initial 
condition $y(0; y_0)=y_0.$
Then, $\{T(t): X_+^{1+\gamma} \to X_+^{1+\gamma} \left|\right.\, t\geqslant 0\}$ is a $C_0$-semigroup, i.e.,
\begin{enumerate}
\item[\text{(i)}] $T(0) = \text{id},$ the identity operator;
\item[\text{(ii)}] $T(t+s) = T(t)T(s),$ for all $t, s\geqslant 0;$
\item[\text{(iii)}] $(t,y_0)\mapsto T(t)y_0$ is a continuous mapping from 
$[0,\infty)\times X_+^{1+\gamma}$ into $X_+^{1+\gamma}.$
\end{enumerate}
\end{theorem}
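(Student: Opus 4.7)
The plan is to prove the three properties in sequence, relying on the uniqueness result of Corollary~\ref{nicecorol}, the invariance of $X_+^{1+\gamma}$ from Proposition~\ref{invariance}, and a continuous dependence estimate obtained by revisiting the argument of Theorem~\ref{uniqtheo}. Properties (i) and (ii) will follow quickly; (iii) requires the bulk of the work, and its main ingredient is a Gronwall-type estimate in the $\|\cdot\|_{1+\gamma}$ norm.

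Property (i), $T(0)=\mathrm{id}$, is immediate since $T(0)y_0=y(0;y_0)=y_0$ by the initial condition in Definition~\ref{defsol}. For (ii), fix $s\geqslant 0$ and $y_0\in X_+^{1+\gamma}$. By Proposition~\ref{invariance} we have $T(s)y_0\in X_+^{1+\gamma}$, so the right-hand side $T(t)T(s)y_0$ is well-defined on $[0,\infty)$. The function $v(t):=y(t+s;y_0)$ solves \eqref{silsys} (by autonomy of the vector field) with $v(0)=T(s)y_0$, and $v(t)\in X_+^{1+\gamma}$ again by invariance. The uniqueness afforded by Corollary~\ref{nicecorol} then forces $v(t)=T(t)T(s)y_0$, which is the semigroup identity.

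For (iii), fix $T>0$ and a bounded neighbourhood of a chosen $y_0\in X_+^{1+\gamma}$. I would split
\[
\|T(t)y-T(t_0)\tilde{y}\|_{1+\gamma}\leqslant\|T(t)y-T(t)\tilde{y}\|_{1+\gamma}+\|T(t)\tilde{y}-T(t_0)\tilde{y}\|_{1+\gamma},
\]
and handle the two summands separately. The time-continuity term (second) is obtained by adapting the proof of Corollary~\ref{unifconv} to the heavier norm: the bound $\|T(t)\tilde{y}\|_{1+\gamma}\leqslant C_1e^{C_2 t}$ extracted from the proof of Proposition~\ref{invariance}, combined with coordinate-wise continuity of the $M_i$ and $x$ from Definition~\ref{defsol}, allows a Dini-type argument applied to the continuous increasing partial sums $\phi_n(t):=|y_1(t)|+\sum_{i=2}^n(i-1)^{1+\gamma}|y_i(t)|$. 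This gives uniform convergence on $[0,T]$ and upgrades coordinate-wise continuity to continuity in the $X^{1+\gamma}$ topology.

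The hard part is the continuous dependence on the initial datum in $\|\cdot\|_{1+\gamma}$, uniformly in $t\in[0,T]$. The strategy is to rerun the uniqueness proof of Theorem~\ref{uniqtheo} with the weight $g_i=(i-1)^{1+\gamma}$ in place of $g_i=i-1$ and without setting $z(0)=0$. The algebraic manipulations on $\tfrac{d}{dt}\sum_{i=2}^N g_i|z_i(t)|$ carry over: since $|g_{i+1}-g_i|=O(i^\gamma)$ and $k_i=O(i^\gamma)$, one has $k_i(g_{i+1}-g_i)=O(i^{2\gamma})=O(g_i)$ precisely because $\gamma\leqslant 1$, so the analogue of estimate \eqref{uniq3} holds, and the bound on $|z_1|$ in terms of the higher moments via the mass-type balance \eqref{Uyt0} goes through unchanged under the hypotheses $p_i=O(1)$ and $q_i=O(i)$. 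Gronwall's inequality then delivers
\[
\|T(t)y-T(t)\tilde{y}\|_{1+\gamma}\leqslant C(T)\|y-\tilde{y}\|_{1+\gamma},\qquad t\in[0,T],
\]
with $C(T)$ depending only on $T$ and on a uniform bound for the $\|\cdot\|_{1+\gamma}$ norm of the two solutions on $[0,T]$ (controlled via Proposition~\ref{invariance} on bounded sets of $X_+^{1+\gamma}$). Combining this with the time continuity via the triangle inequality yields (iii). The main obstacle is verifying that each step of the uniqueness argument tolerates the heavier weight $g_i=(i-1)^{1+\gamma}$, which is exactly where the constraint $\gamma\leqslant 1$ becomes essential.
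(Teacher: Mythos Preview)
Your argument for (i), (ii) and for the time-continuity half of (iii) is fine in spirit (the Dini step needs the limit $\sum_{i}(i-1)^{1+\gamma}M_i(t)$ to be continuous \emph{a priori}, which you can get from the moment identity \eqref{momentscompl} before invoking Dini, so this is easily repaired).

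The real difficulty is in your continuous-dependence step. When you rerun the uniqueness computation \eqref{uniq1} with weight $g_i=(i-1)^{1+\gamma}$, the cross term splits exactly as in the paper into two pieces. The first, $k_{i-2}\tilde y_1|z_i|(g_{i+1}-g_i)$, is indeed harmless: $k_i(g_{i+1}-g_i)=O(i^{2\gamma})=O(g_i)$ since $\gamma\leqslant 1$. But the second piece, coming from $y_i z_1$, is bounded by $k_{i-2}\,y_i\,|z_1|\,(g_{i+1}+g_i)$, and after summation this produces the coefficient
\[
\sum_{i\geqslant 2} k_{i-2}\,(g_{i+1}+g_i)\,y_i(t)\;\sim\;\sum_{i\geqslant 2} i^{\gamma}\cdot i^{1+\gamma}\,y_i(t)\;=\;\sum_{i\geqslant 2} i^{\,1+2\gamma}\,y_i(t).
\]
This is precisely the analogue of \eqref{uniq3}, and it requires a finite $(1+2\gamma)$-moment of the solution $y$, uniformly on $[0,T]$. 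Under the sole hypothesis $y_0\in X_+^{1+\gamma}$, Proposition~\ref{invariance} and estimate \eqref{giMiest} only give control of the $(1+\gamma)$-moment; nothing in the paper propagates a $(1+2\gamma)$-moment from initial data lying merely in $X_+^{1+\gamma}$. So your Gronwall closes only under the stronger assumption $y_0\in X_+^{1+2\gamma}$ (or when $\gamma=0$), and the Lipschitz estimate you aim for is not available in the generality of the theorem. Your diagnosis that ``$k_i(g_{i+1}-g_i)=O(g_i)$ so the analogue of \eqref{uniq3} holds'' conflates the two cross terms: \eqref{uniq3} is about the $(g_{i+1}+g_i)$ factor, not the $(g_{i+1}-g_i)$ one.

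The paper sidesteps this obstruction by \emph{not} seeking a direct Gronwall bound in $\|\cdot\|_{1+\gamma}$. Instead it takes a sequence $y_0^n\to y_0$ in $X_+^{1+\gamma}$, reruns the existence/compactness argument of Theorem~\ref{existence} on the exact solutions $y^n$ (with Theorem~\ref{momentsMi} replacing Lemma~\ref{truncexist}) to extract componentwise convergence $y^n_i\to y_i$, and then passes to the limit $n\to\infty$ in the moment equation \eqref{momentscompl} with $g_i=i^{1+\gamma}$ to obtain $\sum_i i^{1+\gamma}M_i^n(t)\to\sum_i i^{1+\gamma}M_i(t)$. Weak-$*$ convergence together with convergence of norms then yields strong convergence in $X^{1+\gamma}$. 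Crucially, the right-hand side of that moment equation only involves quantities of order $\sum_i i^{2\gamma}M_i^n$, which is dominated by the $(1+\gamma)$-moment, so no $(1+2\gamma)$-moment is ever needed.
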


\begin{proof}
Conditions (i) and (ii) are obvious from the fact that equations \eqref{M0t}--\eqref{xt} are autonomous.
To prove the continuity property in (iii) we first observe that,   due to the uniform convergence in $t\in [0, T]$ of 
$x^n(t)\to x(t)$ and $M_i^n(t)\to M_i(t)$ as $n\to\infty$,
 it is sufficient to prove the continuity separately in $t$ and in the initial condition $y_0$. 

Let us start by the continuity in $t$. 
The continuity of each map $y_i(\cdot\;;y_0),$ $i\in\mathbb{N},$ 
is a consequence of the definition of solution itself. For the continuity in the $X_+^{1+\gamma}$ norm we use  
the integrated version of the moments' equation 
\eqref{momentscompl}. In fact, by our hypothesis on the coefficients $k_i$, by taking $g_i=i^{1+\gamma},$ the hypothesis 
set (B) of Theorem \ref{momentsMi} is satisfied so that \eqref{momentscompl} holds for any $0\leqslant t_1<t_2$ and for any 
integer $m\geqslant 1.$ Fix some $T>0$, take $t_1=0,$ $t_2=T,$ and isolate the integral term of the left-hand side. 
Then, by using on the other terms, the uniform boundedness of $\|y(t;y_0)\|_{1+\gamma}$ for $t\in[0,T],$ and the 
dominated convergence theorem, we conclude that
$$
\phi_m(T,y_0):=\int_0^T\sum_{i=m}^\infty i^{1+\gamma}(p_i+q_i)M_i(s)\,ds\;\longrightarrow
\; 0,\qquad\text{ as }m\to\infty.
$$
Let $\varepsilon>0$ be given. Then, again by the uniform boundedness of $\|y(t;y_0)\|_{1+\gamma}$ for $t\in[0,T],$ 
there is $\delta_0>0,$ depending only on $T$ and $y_0$, such that the right-hand side of \eqref{momentscompl} lies in $[0,\varepsilon/3),$ 
if $0\leqslant t_1<t_2\leqslant T$ and $t_2-t_1<\delta_0.$ Fix the integer $m>2$ so that $\phi_m< \varepsilon/3.$ Then, 
there is $\delta_1\in (0,\delta_0),$ depending only on $T$ and $y_0,$ such that, if $0\leqslant t_1<t_2\leqslant T,$ 
and $t_2-t_1<\delta_1,$
$$
\begin{aligned}0\leqslant \int_{t_1}^{t_2}\sum_{i=1}^\infty i^{1+\gamma}(p_i+q_i)M_i(s)\,ds&\leqslant
\sum_{i=1}^{m-1} i^{1+\gamma}(p_i+q_i)\int_{t_1}^{t_2}M_i(s)\,ds+
\phi_m(T;y_0)\\
&< \frac{\varepsilon}{3}+\frac{\varepsilon}{3}\,,
\end{aligned}
$$
and hence, from  \eqref{momentscompl} we conclude that, 
$$
\left|\sum_{i=1}^\infty i^{1+\gamma}M_i(t_2)-\sum_{i=1}^\infty i^{1+\gamma}M_i(t_1)\right|<\varepsilon,
$$
thus, together with the continuity of $x(\cdot)$ and $M_0(\cdot),$ allowing us to conclude that,
for each $y_0$, $\|T(\cdot)y_0\|_{1+\gamma}=\|y(\cdot\;;y_0)\|_{1+\gamma}$ is continuous in $[0,\infty)$. 
This, together with the continuity of each coordinate $y_i(\cdot\;y_0)$ implies the continuity in the 
$X_+^{1+\gamma}$ norm as a result of standard weak$^*$ convergence results (see Lemma 3.3 in \cite{BCP}, for example).

It remains to prove in (iii) the continuity with respect to $y_0$ in $X^{1+\gamma}.$ 
Consider a sequence $\left(y_0^{n}\right)_{n\in\Nb_0}$
and assume that, for some $y_0\in X_+^{1+\gamma},$ we have $y_0^n\to y_0$ strongly in $X_+^{1+\gamma}$ as $n\to\infty$.
By our present assumptions, for each $y_0^n$ there is a unique globally defined solution
in $X_+^{\gamma}$, $y^n(\cdot):=T(\cdot)y_0^n$, and also
a unique global solution in $X_+^{\gamma}$, $y(\cdot):=T(\cdot)y_0.$ 
We now need to prove that, for each $t$,  $y^n(t)\to y(t)$ strongly in $X_+^{\gamma}$ as $n\to\infty$.

To this goal we start by repeating the proof of the existence result Theorem~\ref{existence} with $y^n$ taking the place of the 
solutions of the $n$-truncated systems. 
Actually, the only difference between what we do now and what was done in the proof of Theorem~\ref{existence} is
that now one must take $\chi_{i,j}\equiv 1$ and, instead of using Lemma~\ref{truncexist} for estimations of the moment
of $y^n$, we need to use Theorem~\ref{momentsMi}. This allows us to conclude that, for the above functions
$y^n=(x^n, M_0^n, M_1^n, \ldots)$, we have, when $n\to \infty,$ 
$y^n \stackrel{\ast}{\rightharpoonup} y$ in $X_+$ which, in particular, 
implies that for each $i$, $M_i^n(s)\to M_i(s)$ as $n\to \infty$. 

Now take a solution $y^n = (x^n, M_0^n, M_1^n, \ldots)$  and consider the 
moment equation  \eqref{momentscompl} satisfied by it with $m=1$, $t_1=0,$ $t_2=t$ and $g_i=i^{1+\gamma}.$ 
We want to prove that, when we pass to the limit $n\to\infty$, the same equation is valid for the limit solution $y=(x,M_0, M_1, \ldots).$
This clearly implies without additional effort that $\|y^n(t)\|_{1+\gamma} \to \|y(t)\|_{1+\gamma}$ which, together with the
corresponding version of Lemma 3.3 in \cite{BCP},  implies the result.
We shall make use of the bound \eqref{ynnorm} (which is valid for every solution) to get uniform bounds on the components $x^n(s)$ and $M_i^n(s)$
of the solution $y^n$, namely
\begin{equation}\label{boundxMy}
x^n(t)\;,\, M_i^n(t) \leqs \|y^n(t)\|\leqslant \|y^n_0\|+(\alpha+r)t\leqslant 2\|y_0\|+(\alpha+r)t
\end{equation}
where the last inequality holds for all sufficiently large $n$ (because $y_0^n\to y_0$ in $X_+^{1+\gamma}\subset X_+$).
The same holds for the components of the limit solution $y$.

Let $t\in [0, T]$ for any fixed $T<\infty,$ and write \eqref{momentscompl} as follows:
\begin{multline}\label{momentsemigroup}
		\sum_{i=1}^\infty i^{1+\gamma}M^n_i(t) \;=\; \sum_{i=1}^\infty i^{1+\gamma}M^n_i(0) - 
                     \int_{0}^{t}\sum_{i=1}^\infty i^{1+\gamma}(p_i+q_i)M^n_i(s)\,ds +\\
                    	+ \int_{0}^{t} k_{0}x^n(s)M^n_{0}(s)\,ds
		+\int_{0}^{t}x^n(s)\sum_{i=1}^{\infty} ((i+1)^{1+\gamma}-i^{1+\gamma})k_i  M^n_i(s)\,ds\,.
		\end{multline}

By the assumption on the initial data the first term in the right-hand side converges to 
$\sum_{i=1}^\infty i^{1+\gamma}M_i(0)$ as $n\to \infty.$

Using the bounds provided by \eqref{boundxMy},  the dominated convergence theorem applied to the second integral in the 
right-hand side of \eqref{momentsemigroup} allows for the convergence of that
integral to $ \int_{0}^{t}k_{0} x(s)M_{0}(s)\,ds$ as $n\to \infty.$

To deal with the third integral in the right-hand side of  \eqref{momentsemigroup} we take a fixed $m\in\Nb$ 
and separate the sum into a sum with $i\leqs m-1$ and another with $i\geqs m.$ For this ``tail sum'',  observe that, with the assumption 
that $k_i=O(i^\gamma)$,  applying Lagrange's  mean value theorem gets us
\[
 \bigl((i+1)^{1+\gamma}-i^{1+\gamma}\bigr)k_i \sim C_{\gamma}i^{2\gamma} = C_{\gamma}\frac {1}{i^{1-\gamma}}i^{1+\gamma}
\leqs \frac{C_{\gamma}}{m^{1-\gamma}}i^{1+\gamma}.
\]
Also, from \eqref{giMiest} and the fact that $C_1$ and $C_2$ in that bound can be chosen independent of $n$ 
(because $y_0^n\to y_0$ in $X_+^{1+\gamma}$) we have the estimate
\[
\sum_{i=m}^{\infty} ((i+1)^{1+\gamma}-i^{1+\gamma})k_i M^n_i(s) 
\leqs \frac{C_{\gamma}}{m^{1+\gamma}}\sum_{i=m}^{\infty}i^{1+\gamma}M^n_i(s)
\leqs  \frac{1}{m^{1+\gamma}}C_{\gamma}C_1e^{C_2s}
\]
and thus, by \eqref{boundxMy} and the dominated convergence theorem, we conclude that, for every $\varepsilon>0$ 
and $s\in [0, T]$, there is a $m_0$ such that, for all $m\geqs m_0$  the tail contribution
satisfies
\begin{equation}\label{tail}
\int_{0}^{t}x^n(s)\sum_{i=m}^{\infty} ((i+1)^{1+\gamma}-i^{1+\gamma})k_i  M^n_i(s)\,ds \leqs 
 \frac{1}{m^{1+\gamma}}C(T,\|y_0\|, \|y_0\|_{1+\gamma}) < \frac{1}{3}\varepsilon,
\end{equation}
where the constant $C(T,\|y_0\|, \|y_0\|_{1+\gamma})$ does not depend on $n.$
Clearly \eqref{tail} is also valid with $x^n$ and $M_i^n$ changed to $x$ and $M_i$, respectively.
For the sum with $1\leqs i\leqs m-1$ the pointwise convergence of $M_i^n(s)$ to $M_i(t)$ and the 
bound \eqref{boundxMy} allow us to apply again the dominated convergence theorem to conclude that, for
every $\varepsilon>0$, there exists a $n_0$ such that, for all $n>n_0$,
\begin{multline}\label{notail}
\left|
\int_0^t x^n(s)\sum_{i=1}^{m-1} ((i+1)^{1+\gamma}-i^{1+\gamma})k_i M^n_i(s)ds\; - \right. \\
\left. - \int_0^t x(s)\sum_{i=1}^{m-1} ((i+1)^{1+\gamma}-i^{1+\gamma})k_i M_i(s)ds
\right| < \frac{1}{3}\varepsilon
\end{multline} 
Plugging together \eqref{notail} and \eqref{tail} proves that the last integral in the right-hand side
of \eqref{momentsemigroup} converges to the integral with $x$ and $M_i$ in place of $x^n$ and $M_i^n$,
respectively.

We finally consider the first integral in the right-hand side of \eqref{momentsemigroup}.
Again we separate the sum into a part with $1\leqs i\leqs m-1$ and a tail contribution with $i\geqs m$.
For the tail term we can use \eqref{momentscompl}, Proposition~\ref{invariance},  and what we
proved above for the remaining integral terms to conclude that, for every $\varepsilon >0$
there exists $m$ and $n_0$ such that, for all $n>n_0$, 
\begin{equation}\label{beforelastbound}
 \int_{0}^{t}\sum_{i=m}^\infty i^{1+\gamma}(p_i+q_i)M^n_i(s)\,ds < \frac{1}{3}\varepsilon,
\end{equation}
and the same holds with $M_i^n$ substituted by $M_i$.
For the sum with $1\leqs i\leqs m-1$ it suffices to note that, for all $s\in [0, T]$ and
for $n>n_0$ (augmenting $n_0$ if needed)
\begin{equation}\label{lastbound}
\sum_{i=1}^{m-1} i^{1+\gamma}(p_i+q_i)M^n_i(s) \leqs (m-1)^{\gamma}
\max_{1\leqs i\leqs m-1}(p_i+q_i)(2\|y_0\|+(\alpha+r)T),
\end{equation}
and so, applying the dominated convergence theorem one last time, we conclude that, 
for every $\varepsilon >0$ there exists $m$ and $n_0$ such that, for all $n>n_0$, 
\begin{multline}\label{notail2}
\left|
\int_0^t \sum_{i=1}^{m-1} i^{1+\gamma}(p_i+q_i)M^n_i(s)ds 
 - \int_0^t \sum_{i=1}^{m-1} i^{1+\gamma}(p_i+q_i)M_i(s)ds
\right| < \frac{1}{3}\varepsilon.
\end{multline} 
Hence, by \eqref{beforelastbound} and \eqref{notail2} we conclude that, when $n\to\infty$,
the first integral in the right-hand side of \eqref{momentsemigroup} converges to a similar term with
$M_i$ in place of $M_i^n$.

We now have all the ingredients to pass to the limit $n\to\infty$  in both sides of \eqref{momentsemigroup} 
and obtain, for all $t\in [0, T],$
\begin{multline}\label{limitmomentsemigroup}
		\lim_{n\to\infty}\sum_{i=1}^\infty i^{1+\gamma}M^n_i(t) \;=\; \sum_{i=1}^\infty i^{1+\gamma}M_i(0) - 
                     \int_{0}^{t}\sum_{i=1}^\infty i^{1+\gamma}(p_i+q_i)M_i(s)\,ds +\\
                    	+ \int_{0}^{t} k_{0}x(s)M_{0}(s)\,ds
		+\int_{0}^{t}x(s)\sum_{i=1}^{\infty} ((i+1)^{1+\gamma}-i^{1+\gamma})k_i  M_i(s)\,ds\,.
		\end{multline}
Being $y=(x, M_0, M_1, \ldots)$ the unique solution of  \eqref{silsys} with initial condition $y_0\in X_+^{1+\gamma}$,
from Theorem~\ref{momentsMi} we conclude that the right-hand side of  \eqref{limitmomentsemigroup} is equal to
$\sum_{i=1}^\infty i^{1+\gamma}M_i(t)$. That is, given the arbitrariness of $T$,
\[
\lim_{n\to\infty}\sum_{i=1}^\infty i^{1+\gamma}M^n_i(t) = \sum_{i=1}^\infty i^{1+\gamma}M_i(t),\quad \forall t\geqs 0.
\]
This and the expression for the norm of $X^{1+\gamma}$ in \eqref{normmu} allow us to conclude without further effort that, 
with the present assumptions, solutions $y(\cdot\, ; y_0)$ of \eqref{silsys} depend on the
initial condition $y_0$ continuously in the norm topology of $X^{1+\gamma}$. 
This completes the proof. \end{proof}

\subsection*{Acknowlegments} We thank an anonymous referee for his/her helpful suggestions and
for   pointing out an error in the 
statement and proof of  the original version of Theorem~\ref{semigroup}.

%
%
\bibliographystyle{amsplain}

\end{document}